\newtheorem{theorem}{Theorem}[section]
\newtheorem{lemma}[theorem]{Lemma}
\newtheorem{proposition}[theorem]{Proposition}
\newtheorem{corollary}[theorem]{Corollary}
\newtheorem{example}[theorem]{Example}
\theoremstyle{remark}
\newcommand{\R}{\mathbb{R}}
\begin{document}

\title{Special cases of the quadratic shortest path problem}
\author{Hao Hu \thanks{CentER, Department of Econometrics and OR, Tilburg University, The Netherlands, {\tt h.hu@uvt.nl}}
\and {R. Sotirov \thanks{Department of Econometrics and OR, Tilburg University, The Netherlands, {\tt r.sotirov@uvt.nl}}} }
\date{}

\maketitle
\begin{abstract}
The  quadratic shortest path problem (QSPP) is \textcolor{black}{the problem of finding a path with prespecified start
vertex $s$ and end vertex $t$ in a digraph} such that
the sum of weights of arcs and the sum of interaction costs over all pairs of arcs on the path is minimized.
We first consider a variant  of the QSPP known as the adjacent QSPP.
It was recently proven that the adjacent QSPP on cyclic digraphs cannot be approximated unless P=NP.
Here, we give a simple proof for the same result.

We also show that if the quadratic cost matrix is a symmetric weak sum matrix \textcolor{black}{ and all $s$-$t$ paths have the same length,}
then an optimal solution for the QSPP  can be obtained by solving the corresponding  instance of the shortest path problem.
Similarly, it is shown that the QSPP with a symmetric product cost matrix is solvable in polynomial time.

Further, we provide sufficient and necessary conditions for a QSPP instance on a complete symmetric digraph with four vertices to be linearizable.
We also characterize linearizable QSPP instances on complete symmetric digraphs with more than four vertices.
Finally, we derive an algorithm that examines whether  a QSPP instance on the directed grid graph $G_{pq}$ ($p,q\geq 2$) is linearizable.
The complexity of this  algorithm  is ${\mathcal{O}(p^{3}q^{2}+p^{2}q^{3})}$.
\end{abstract}

\noindent Keywords: quadratic shortest path problem, complexity, directed graph, linearizable instances

\section{Introduction}
The shortest path problem (SPP) is the problem of finding a path between two vertices in a directed graph such that the total weight of the arcs on the path is minimized.
The quadratic  shortest path problem (QSPP) is the problem of finding a  path between two  vertices  in a directed graph such that   the total weight of the arcs and
the sum of interaction costs over all pairs of arcs on the path is minimized.

The SPP is a well-studied combinatorial optimization problem, that  can be solved in polynomial time if there do not exist negative cycles in the considered graph.
There exist several efficient algorithms for solving the shortest path problem, e.g., the  Dijkstra algorithm \cite{dijkstra1959note} and
the Floyd--Warshall algorithm \cite{Floyd,Warshall}.
The SPP can be applied to various problems such as transportation planning, network protocols, plant and facility layout, robotics, VLSI design etc.
On the other hand,   there are not many results on the quadratic shortest path problem.
In the recent paper by Rostami et al.~\cite{rostami2016quadratic}
it is proven that  the general QSPP cannot be approximated unless P=NP.
The same result is proven for  the adjacent QSPP (AQSPP), that is a variant of the QSPP.
In the AQSPP  interaction  costs of all non-adjacent pairs of arcs are equal to zero, see \cite{rostami2016quadratic}.
However, the adjacent QSPP is solvable in polynomial time for acyclic graphs and \textcolor{black}{series-parallel graphs, see \cite{rostami2016quadratic}.}

Although the QSPP was only recently introduced,
several variants of the SPP that are related to the QSPP were studied in  \cite{sivakumar1994variance,Sen:01}.
In particular  Sivakumar and  Batta \cite{sivakumar1994variance} consider a variance-constrained shortest path, and  Sen et al.~\cite{Sen:01} a route-planning model
in which the choice of a route is based on the mean as well as the variance of the path travel-time.
The QSPP is also related to the reliable shortest path problem, see e.g., Nie and Wu \cite{NieWu}.
The QSPP appears in a study on network protocols.
Namely, Murakami and Kim \cite{murakami1997comparative} study different restoration schemes of survivable asynchronous transfer mode  networks that can be formulated as the QSPP.
For \textcolor{black}{a detailed overview} on applications of the QSPP see \cite{rostami2016quadratic}.

Buchheim and Traversi \cite{buchheim2015quadratic}, and  Rostami et al.~\cite{rostami2016quadratic} present several approaches  to solve the general QSPP.
In particular, the authors  in \cite{buchheim2015quadratic} consider separable underestimators that can be exploited for solving binary quadratic programming problems, including the QSPP.
Several lower bounding approaches for  the QSPP, including a Glimore-Lawler-type  bound and  reformulation-based bound are presented in \cite{rostami2016quadratic}.
In this paper we do not investigate computational aspects for solving the QSPP in general. \\

\noindent
\textbf{Main results and outline.}\\
In Section \ref{sect:probl_form}, we formulate the quadratic shortest path problem as an integer programming problem.
Complexity results for the general and adjacent QSPP are given in Section \ref{section:NPhard}.
In particular, in Section \ref{section:general} we derive a new polynomial-time reduction from the well known quadratic assignment problem (QAP)  to the QSPP.
Our reduction differs from the one given by Rostami et al.~\cite{frey2015quadratic}. Namely, our approach results in an instance for the QSPP with $n^2$ arcs,
while the reduction from  \cite{frey2015quadratic} derives an instance with $\mathcal{O}(n^3)$ arcs. Here, $n$ is the order of the data matrices in the  quadratic assignment problem.
The here presented polynomial-time reduction from the QAP, in combination with the library of the QAP \cite{QAPlib},
provides a source of difficult QSPP test instances.

In Section \ref{section:AQSPP}, we describe the polynomial-time algorithm for solving the adjacent QSPP from  \cite{frey2015quadratic}.
We also show that the algorithm  fails for the adjacent QSPP  considered on directed cyclic graphs, while it performs well on directed acyclic graphs (DAGs).
Further, we  provide a polynomial-time  reduction from the 2-arc-disjoint paths problem, that is known to be NP-complete, to the adjacent QSPP considered on a directed cyclic graph.
Our proof of inapproximability is \textcolor{black}{considerably simpler}  than the proof from  \cite{rostami2016quadratic}.

In Section \ref{section:linearQSPP}, we consider  special cases of the QSPP.
{\color{black}We first consider linearizable instances; that is,  when there exists a corresponding instance of the SPP such that
the associated costs for both problems are equal for every feasible path.
It is easy to see that the QSPP considered on a directed cycle is linearizable.}
Here, we also show that a QSPP instance on a  digraph for which every $s$-$t$ path has the same length and
 whose quadratic cost matrix is a symmetric weak sum matrix is linearizable.
Finally, we prove that a solution of the QSPP whose  quadratic cost matrix together with linear costs included on the diagonal
is a nonnegative symmetric product matrix can be obtained by solving the corresponding SPP.

We provide sufficient and necessary conditions for an instance of the  QSPP on a complete digraph with four nodes to be linearizable in Section \ref{sect:completeGraphs}.
In the same section we give  several properties of linearizable QSPP instance on complete digraphs with more than four nodes.

In Section \ref{sect:DGG}, we present an algorithm that examines whether a QSPP instance on the directed grid graph $G_{p,q}$ ($p,q\geq 2$) is linearizable.
If the instance is linearizable, then our algorithm  provides the corresponding linearization vector.
The complexity of the algorithm is   ${\mathcal{O}(p^{3}q^{2}+p^{2}q^{3})}$.

\section{Problem formulation} \label{sect:probl_form}

Let $G = (V,A)$ be a directed graph with vertex set $V$ ($|V|=n$) and arc set $A$ ($|A|=m$).
A walk is defined as an ordered set of  vertices  ${(v_{1},\ldots,v_{k})}$, $k> 1$ such that $(v_{i},v_{i+1})\in A$ for $i=1,\ldots,k-1$.
The length of a walk equals to the number of visited arcs. A walk is called a path if it does not contain repeated  vertices.
Given a source vertex ${s\in V}$ and a target vertex $t\in V$, a  $s$-$t$ path is a path ${P =(v_{1},v_{2},\ldots,v_{k})}$
such that $v_{1}= s$ and $v_{k} = t$.

The  quadratic  cost of a $s$-$t$ path $P$ is calculated as follows.
We are given a nonnegative vector $c \in \R_{+}^{m}$ indexed by the arc set $A$,
and a nonnegative  symmetric matrix of order $m$  $Q=(q_{e,f})$ with zero-diagonal
whose rows and columns are   indexed by  the arc set.
An arc $e \in P$ has the linear cost  (weight) $c_{e}$, and a pair of arcs $e,f \in P$
the interaction  cost $2 q_{ef}$. The total cost of  the $s$-$t$ path $P$ is given by
\begin{equation}\label{costequation}
C(P,c,Q) = \sum_{e,f \in P} q_{ef}  + \sum_{e\in P} c_{e}.
\end{equation}
If $Q$ is a zero-matrix, then the cost of a $s$-$t$ path $P$ is denoted by $C(P,c)$.
We assume that the graph $G$ does not contain a directed cycle {\color{black} whose total cost is zero}.

Let us introduce the quadratic shortest path problem in a formal way.
Let $P$ be a $s$-$t$ path, and $x$ a binary vector of length $m$ such that $x_{ij}$ is one if the arc $(i,j)\in A$ is on the $s$-$t$ path $P$ and zero  otherwise.
Now, the quadratic  cost of the $s$-$t$ path $P$ with the characteristic vector $x$, is given by
\[
\sum_{(i,j),(k,l) \in A} q_{ij,kl} x_{ij}x_{kl} + \sum_{(i,j)\in A} c_{ij} x_{ij} = x^{\mathrm T}Qx + c^{\mathrm T}x.
\]
Given a vertex $i \in V$, the set of predecessor and successor  vertices  of
$i$ are denoted by  $\delta^{-}(i) := \{ j \in V \;| \; (j,i) \in A \}$
and $\delta^{+}(i) := \{ j \in V \;| \; (i,j) \in A  \}$, respectively.
The path polyhedron is defined as follows:
\begin{equation}\label{stPathPolytope}
P_{st}(G) := \{ x\in \R^{m} \; | \; \sum_{j \in \delta^{+}(i)} x_{ij} - \sum_{j \in \delta^{-}(i)} x_{ji} = b_{i} \;\; \forall i\in V,\;\;  0 \leq x \leq 1 \},
\end{equation}
where $b$ is a vector of length $n$ such that $b_{i} = 1$ if $i = s$, $b_{i} = -1$ if $i = t$,
and $b_{i} = 0$ if $i \in V \backslash \{s,t\}$.
Now  the QSPP can be modeled as the following quadratic integer programming problem:
\begin{equation}\label{orignalQAP}
\begin{aligned}
& \text{minimize}
& &  x^{\mathrm T}Qx + c^{\mathrm T}x\\
& \text{subject to}
& &  x \in P_{st}(G) \cap \{0,1\}^{m}. & \hspace{0.5cm} & \\
\end{aligned}
\end{equation}
\textcolor{black}{Since there are no cycles of cost zero in $G$, the optimal solution of \eqref{orignalQAP} is guaranteed to be an $s$-$t$ path.}
If $Q$ is a zero-matrix,  then the problem reduces to the shortest path problem.
Due to the flow conservation law, one can remove one of the equations in $P_{st}(G)$.

A  QSPP instance $\mathcal{I}$ (resp.~SPP instance  $\mathcal{I'}$)  can be specified by the tuple $\mathcal{I} = (G,s,t,c,Q)$ (resp.~$\mathcal{I'} = (G,s,t,c)$).
Note that we use  both, $e$ and $(i,j)$,  to denote an arc $e=(i,j)$.
Sometimes one is more convenient than another.

\section{Complexity results for the general and adjacent QSPP}\label{section:NPhard}

In Section \ref{section:general} we  present a polynomial-time reduction from the quadratic assignment problem  to the QSPP.
Our reduction results in a significantly smaller number of arcs in the constructed  QSPP instance,
than  the number of the arcs in the QSPP instance  provided by the  reduction from  \cite{frey2015quadratic}.
Section \ref{section:AQSPP} and \ref{section:AQSPP_disjoint} consider the adjacent  QSPP.
In Section \ref{section:AQSPP} we review the algorithm from  \cite{frey2015quadratic} on solving the AQSPP on directed graphs,
and show that it fails when the digraph under consideration contains a cycle.
Further, we provide a proof  showing that the AQSPP on cyclic digraphs cannot be approximated unless P=NP, see Section \ref{section:AQSPP_disjoint}.
Our proof is simpler than the proof from \cite{rostami2016quadratic}.

\subsection{The general QSPP} \label{section:general}

In \cite{frey2015quadratic} it is proven that the QSPP is NP-hard  by providing  a polynomial-time reduction from the QAP.
The size of so constructed QSPP instance in \cite{frey2015quadratic}   is considerably larger than the size of the input QAP instance.
In particular, if a QAP instance consists of $n$ facilities and $n$  locations, then the constructed  QSPP instance as described in
 \cite{frey2015quadratic} has $n^2+2$  vertices  and $n^3-2n^2+3n$ arcs.

We present here another polynomial-time reduction from the QAP to the QSPP.
Our reduction yields a QSPP instance with $n+1$  vertices  and $n^2$ arcs, where $n$ is the order of the data matrices in the QAP.
This enables us to derive  QSPP  test instances of reasonable sizes, from the QAP instances given in the QAP library \cite{QAPlib}.
Note that the QAP library contains solutions and/or bounds for many QAP instances,
 and is therefore a source of test instances for the QSPP.

The quadratic assignment problem is the following optimization problem:
\[
\min \left \{ \sum_{i,j,k,l}a_{ik} b_{jl}x_{ij}x_{kl} + \sum_{i,j}c_{ij}x_{ij}: ~X=(x_{ij}), ~X\in \Pi_n \right \},
\]
where $A=(a_{ik})$, $B=(b_{jl})$ are given symmetric $n\times n$ matrices,  $C=(c_{ij})\in \R^{n \times n}$,
and $\Pi_n$ is the set of $n\times n $ permutation matrices.

The quadratic assignment problem has the following interpretation.
Suppose that there are $n$ facilities and $n$ locations.
The flow between each pair of facilities, say $i,k$, and the distance between each pair of locations, say $j,l$,
are given by $a_{ik}$ and $b_{jl}$, respectively. The cost of placing a facility $i$ to location $j$ is $c_{ij}$.
The QAP problem is to find an assignment of facilities to locations such that the sum of the distances multiplied
by the corresponding flows together with the total  cost is minimized.

Let us now allow a directed multigraph in the definition of the QSPP problem.
A multigraph is a graph which is permitted to have multiple arcs \textcolor{black}{between any pair of vertices}.
We can now prove the following theorem.
\begin{theorem} \label{QAPtoQSPP2}
\textcolor{black}{There exists a polynomial time reduction from QAP to QSPP, i.e., $QAP \propto QSPP$.}
\end{theorem}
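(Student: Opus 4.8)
The plan is to realize assignments of facilities to locations as $s$-$t$ paths in a layered directed multigraph, and then to push the entire QAP objective into the linear vector $c$ and the quadratic matrix $Q$, using a large penalty to discard the paths that fail to be permutations. First I would build $G$ on the $n+1$ vertices $\{0,1,\dots,n\}$, set $s=0$ and $t=n$, and for each layer $i\in\{1,\dots,n\}$ and each location $j\in\{1,\dots,n\}$ introduce a single arc $e_{ij}$ from vertex $i-1$ to vertex $i$. This produces exactly $n+1$ vertices and $n^2$ arcs, as claimed, and since every arc points forward the digraph is acyclic; hence the no-zero-cost-cycle hypothesis holds trivially and, by the remark following \eqref{orignalQAP}, any optimal solution of the QSPP is a genuine $s$-$t$ path. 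Such a path uses exactly one arc per layer, so it corresponds to a map $\pi:\{1,\dots,n\}\to\{1,\dots,n\}$ with $\pi(i)=j$ whenever $e_{ij}$ lies on the path; the path encodes a permutation precisely when $\pi$ is injective.

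Next I would choose the costs so that $C(P,c,Q)$ reproduces the QAP objective on permutation-paths. I set the linear cost $c_{e_{ij}}=c_{ij}+a_{ii}b_{jj}$, folding the diagonal QAP terms (those with $i=k$, $j=l$) into the linear part, as is forced by the zero-diagonal requirement on $Q$. For arcs in distinct layers $i\neq k$ I set
\[
q_{e_{ij},\,e_{kl}}=
\begin{cases}
a_{ik}\,b_{jl}+M & \text{if } j=l,\\
a_{ik}\,b_{jl} & \text{if } j\neq l,
\end{cases}
\]
and set $q_{e_{ij},e_{il}}=0$ for arcs sharing a layer (these never co-occur on a path, so their value is immaterial). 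Symmetry of $Q$ follows from the symmetry of $A$ and $B$. A direct expansion of \eqref{costequation} then gives, for a path encoding $\pi$,
\[
C(P,c,Q)=\sum_{i,k}a_{ik}b_{\pi(i)\pi(k)}+\sum_{i}c_{i\pi(i)}+M\cdot N(\pi),
\]
where $N(\pi)$ is the number of ordered pairs $i\neq k$ with $\pi(i)=\pi(k)$; the first two terms are exactly the QAP cost of $\pi$, and $N(\pi)=0$ holds if and only if $\pi$ is a permutation.

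Finally I would fix $M$ strictly larger than the maximum QAP cost over all permutations — a crude, polynomially computable bound such as $M=1+\sum_{i,k,j,l}a_{ik}b_{jl}+\sum_{i,j}c_{ij}$ suffices — so that every non-injective $\pi$ pays at least $M$ and is therefore strictly dominated by any permutation-path. An optimal QSPP path then encodes an optimal permutation, the optimal values coincide, and the whole construction runs in time polynomial in $n$ and the input size, establishing $QAP\propto QSPP$. The one point demanding care is the nonnegativity imposed on $c$ and $Q$: since QAP is already NP-hard for nonnegative data, and since adding a constant to every entry of $A$, of $B$, or of $C$ shifts the objective of \emph{every} permutation by the same amount (because a permutation sums each matrix over all its entries) and hence preserves the set of optimizers, I may assume $A,B,C\ge 0$, which makes all the costs above nonnegative. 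I expect the main obstacle to be exactly this interplay — enforcing the assignment (bijectivity) structure through the quadratic penalty alone, as the path polyhedron $P_{st}(G)$ carries no assignment constraints, while simultaneously respecting the sign restrictions on the QSPP data.
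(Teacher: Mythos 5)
Your construction is essentially the paper's: the same layered multigraph on $n+1$ vertices with $n$ parallel arcs per layer ($n^2$ arcs in total), $s$-$t$ paths encoding maps from one index set to the other, the QAP objective split between $c$ and $Q$ with the diagonal terms absorbed into the linear part, and a big-$M$ penalty to eliminate non-bijective paths. The one substantive difference is \emph{where} the penalty sits, and your placement is the correct one: you charge $M$ to pairs of arcs in \emph{distinct} layers that select the \emph{same} location, which is precisely the event that makes the induced map non-injective, and you verify that any such path costs at least $M$ while every permutation path costs less than $M$. The paper instead places the penalty on pairs with $j=l$ and $i\neq k$, i.e., two parallel arcs of the same bundle; such a pair can never lie on a common $s$-$t$ path, so that condition is vacuous, and under the paper's zero-diagonal convention the genuinely offending pairs (same facility, different locations) receive interaction cost $a_{ii}b_{jl}=0$. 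Read literally, the paper's claim that every path of value less than $M$ corresponds to an assignment therefore does not follow, whereas your version delivers it. Your explicit shift argument guaranteeing nonnegativity of the constructed data and your concrete polynomial bound for $M$ are further details the paper leaves implicit. In short: correct, the same reduction in spirit, and more carefully executed at the penalty step.
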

\begin{proof}
Suppose we are given a QAP instance with $n \times n$ input matrices $A,B,C$.
\textcolor{black}{Assume without loss of generality that the diagonal entries of $A,B$ are all equal to zero.
(Note that  one could ``shift'' the diagonal elements to the linear cost matrix $C$).}
We construct a QSPP instance on the graph $G = (V,A)$ whose vertex and arc sets are defined as follows:
\begin{align*}
V := & ~\{w_{j} : ~j = 1,\ldots,n+1\},\\
A := &  ~\{ (w_{j},w_{j+1})^{i}  :~  i,j =1,\ldots, n \},
\end{align*}
where the superscript $i$ indicates the $i$th arc between vertices $w_{j}$ and $w_{j+1}$.
The starting and ending vertices are $s = w_{1}$ and $t = w_{n+1}$, respectively.
	
The linear cost of the arc $e$ is defined as
	$$c_{e}  := \begin{cases}
	c_{ij} & \text{if } e = (w_{j},w_{j+1})^{i} \\
	0 & \text{otherwise}.
	\end{cases}$$
The interaction cost between the pair of arcs  $e = (w_{j},w_{j+1})^{i}$ and $f = (w_{l},w_{l+1})^{k}$ is defined:
	$$ q_{e,f} := \begin{cases}
	a_{ik}  \cdot b_{jl}  & \text{if } j \neq l \\
	M & \text{if } j=l ~{\rm and}~ i\neq k,
	\end{cases}$$
where $M$ is a big number. All the other pairs of arcs have zero interaction costs.

If we have a feasible QAP instance, say facility   $i$ is mapped into location $\pi(i)$,
then we take the feasible QSPP instance with arcs $(w_{\pi(i)},w_{\pi(i)+1})^{i}$  ($i = 1,\ldots,n$).
By construction, those two instances have the same objective value.
Conversely, every $s$-$t$ path in $G$ \textcolor{black}{with objective value less than $M$} corresponds to the assignment for  the  QAP  with the same objective value.
\end{proof}
It follows from the above construction that for a given input instance of the QAP with $n\times n$ data matrices,
one can construct a  QSPP instance with $n+1$  vertices  and $n^2$ arcs.
\textcolor{black}{We note that the polynomial time reduction from Theorem \ref{QAPtoQSPP2}  is also valid for the undirected version of the QSPP.}

\subsection{The adjacent QSPP  restricted to DAGs}\label{section:AQSPP}

The adjacent QSPP  is a variant of the QSPP, where interaction costs of all non-adjacent pairs of arcs are equal to zero.
In other words, only the interaction  cost of the form $q_{ij,kl}$ with $j = k$ and $i\neq l$, or with $i = l$  and $j \neq k$ can have nonzero value.
A polynomial time algorithm that solves instances of the adjacent  QSPP on directed acyclic graphs is presented in  \cite{frey2015quadratic}.
It is actually stated  in  \cite{frey2015quadratic}  that the  proposed algorithm finds an optimal solution for the AQSPP on any digraph in polynomial time,
which turns to be true only for  directed acyclic graphs.

In this section we first describe  the  approach from \cite{frey2015quadratic}, and then provide an example to show that the algorithm
fails if the graph under consideration is not acyclic.

Let $G$ be a directed acyclic graph and $\mathcal{I} = (G,s,t,c,Q)$  an instance of the AQSPP.
We construct the auxiliary graph $G' = (V',A')$ from  $G=(V,A)$ in the following way:
\begin{equation} \label{VcAc}
V' := \{V_{(s,s)},V_{(t,t)}\} \cup \{ V_{e} \;|\; e \in A  \}, \\ \quad
A' := \{(V_{(i,j)},V_{(j,l)})  \;|\;   i\neq l \},
\end{equation}
where $V_{(s,s)}$ and $V_{(t,t)}$ represent vertices $s$ and $t$, respectively.
The costs of the arcs  in the graph $G'$  are given as follows
$$c'_{(V_{e},V_{f})} = \begin{cases}
c_{f} & \text{ if } e = (s,s) \\
0 & \text{ if } f=(t,t)\\
c_{f} + 2 q_{e,f} & \text{ otherwise}. \\
\end{cases}$$
Now, the auxiliary  instance $\mathcal{I}'$ of $\mathcal{I}$ is the following SPP instance $\mathcal{I}' = (G',V_{(s,s)},V_{(t,t)},c')$.

The following theorem  shows that  the optimal $s$-$t$ path for the AQSPP instance $\mathcal{I}$ on a directed acyclic graph
can be obtained by solving  the SPP instance $\mathcal{I}'$.
\begin{theorem}\label{radjacentDAGtheorem}
Let $G$ be a directed acyclic graph,  $\mathcal{I} = (G,s,t,c,Q)$ an AQSPP instance,
and $\mathcal{I}' = (G',V_{(s,s)},V_{(t,t)},c')$ the auxiliary  instance of $\mathcal{I}$.
Then, an optimal solution of $\mathcal{I}$ can be obtained by solving the SPP for $\mathcal{I}'$.
\end{theorem}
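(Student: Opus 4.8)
The plan is to establish a cost-preserving bijection between the $s$-$t$ paths of $G$ and the $V_{(s,s)}$-$V_{(t,t)}$ paths of $G'$; once this is in place, minimizing the AQSPP cost over paths in $G$ becomes identical to solving the SPP instance $\mathcal{I}'$, and an optimal shortest path in $G'$ pulls back to an optimal AQSPP path in $G$.

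First I would define the forward map. Given an $s$-$t$ path $P=(s=v_{1},v_{2},\ldots,v_{k}=t)$ with arc sequence $e_{1},\ldots,e_{k-1}$, where $e_{i}=(v_{i},v_{i+1})$, I send $P$ to the sequence $P'=(V_{(s,s)},V_{e_{1}},V_{e_{2}},\ldots,V_{e_{k-1}},V_{(t,t)})$ in $G'$. I would then check that $P'$ is a genuine path: each interior consecutive pair has the form $(V_{(v_{i},v_{i+1})},V_{(v_{i+1},v_{i+2})})$ with $v_{i}\neq v_{i+2}$ (since $P$ has distinct vertices), so it is an arc of $G'$ by \eqref{VcAc}; the initial and terminal steps involving $V_{(s,s)}$ and $V_{(t,t)}$ satisfy the arc conditions because $s\neq v_{2}$ and $v_{k-1}\neq t$; and distinctness of the arcs $e_{1},\ldots,e_{k-1}$ makes the intermediate vertices $V_{e_{1}},\ldots,V_{e_{k-1}}$ of $G'$ pairwise distinct.

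Next I would verify that the map preserves cost. Summing $c'$ along $P'$, the first arc contributes $c_{e_{1}}$, the last arc contributes $0$, and each interior arc $(V_{e_{i}},V_{e_{i+1}})$ contributes $c_{e_{i+1}}+2q_{e_{i},e_{i+1}}$; collecting terms yields $\sum_{i=1}^{k-1}c_{e_{i}}+2\sum_{i=1}^{k-2}q_{e_{i},e_{i+1}}$. Because $Q$ is symmetric with zero diagonal and only adjacent pairs of arcs carry nonzero interaction cost, the nonconsecutive arcs of $P$ contribute nothing, so this expression is exactly the AQSPP cost $C(P,c,Q)$ from \eqref{costequation}.

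The main obstacle is surjectivity of the forward map, and this is exactly where acyclicity enters. Given any $V_{(s,s)}$-$V_{(t,t)}$ path in $G'$, reading the arc labels of the intermediate vertices yields a sequence $f_{1},\ldots,f_{r}$ of arcs of $G$; the arc structure \eqref{VcAc} forces the head of $f_{i}$ to equal the tail of $f_{i+1}$, while the endpoint conditions force $f_{1}$ to start at $s$ and $f_{r}$ to end at $t$, so this sequence traces an $s$-$t$ walk in $G$. The delicate point is that a priori this is only a walk. Here I would invoke that $G$ is a DAG: any repeated vertex in the walk would close a directed cycle, which is impossible, so the walk is automatically a path. (This is precisely the property that fails on cyclic digraphs, explaining why the algorithm breaks there.) Distinctness of the $G'$-path vertices makes the $f_{i}$ distinct, and applying the forward map to the resulting $s$-$t$ path recovers the original $G'$-path, so the two maps are mutually inverse. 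Since the bijection is cost-preserving, a shortest $V_{(s,s)}$-$V_{(t,t)}$ path in $G'$ corresponds to an optimal $s$-$t$ path for $\mathcal{I}$, which is the claim.
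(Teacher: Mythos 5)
Your proof is correct and follows essentially the same route as the paper's: the same forward construction of the $V_{(s,s)}$-$V_{(t,t)}$ path, the same cost computation, and the same use of acyclicity to guarantee that the walk read off from a $G'$-path has no repeated vertices. If anything, you are slightly more explicit than the paper in noting that on a path adjacent arcs are exactly consecutive arcs (so the consecutive-pair sum captures the full AQSPP cost) and in checking distinctness of the $G'$-vertices, but the underlying argument is identical.
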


\begin{proof} (See also \cite{frey2015quadratic}.)
We first show that  any $s$-$t$ path in $G$ corresponds to a $V_{(s,s)}$-$V_{(t,t)}$ path in $G'$, and vice-versa.
For ease of notation we set $v_1:=s$ and $v_k:=t$.
Let $P = (v_{1},v_{2},\ldots,v_{k})$ be a $v_{1}$-$v_{k}$ path in $G$.
Then it is not difficult to verify that
\[
{P'= (V_{(v_{1},v_{1})},V_{(v_{1},v_{2})},V_{(v_{2},v_{3})},\ldots,V_{(v_{k-1},v_{k})},V_{(v_{k},v_{k})})}
\]
is a $V_{(v_{1},v_{1})}$-$V_{(v_{k},v_{k})}$ path in the graph $G'$.
The cost of the path $P'$ is given by
$\sum_{i=1}^{k-1}c_{(v_{i},v_{i+1})} + 2\cdot\sum_{i=1}^{k-2} q_{(v_{i},v_{i+1}),(v_{i+1},v_{i+2})}$, which is exactly the  cost of the path $P$.

Conversely, let $P' = (V_{(v_{1},v_{1})},V_{(v_{1},v_{2})},V_{(v_{2},v_{3})},\ldots,V_{(v_{k-1},v_{k})},V_{(v_{k},v_{k})})$ be a $V_{(v_{1},v_{1})}$-$V_{(v_{k},v_{k})}$ path  in $G'$.
Take the ordered set of vertices $P = (v_{1},v_{2},\ldots,v_{k})$.
Let us verify that $P$ is a walk that does not contain repeated vertices.
From the definition of $V'$ and $A'$, see \eqref{VcAc}, it follows that  $v_{i} \in V$ for all $i$, and
$(v_{i},v_{i+1}) \in A$ for ${i = 1,\ldots,k-1}$.
It remains now to verify that there do not exist $k,l$ ($k \neq l$) for which  $v_{k} = v_{l}$.
Indeed, since $G$ is acyclic this is not possible.
Thus $P$ is a $s$-$t$ path in $G$ whose total cost equals to the linear cost of $P'$.
\end{proof}
Note that if $G$ is not acyclic,  then there may exist a $V_{(s,s)}$-$V_{(t,t)}$ path in the
auxiliary graph for which does not exist  a corresponding $s$-$t$ path in $G$.
Let us give an example.

\begin{example}{\rm
Consider a QSPP instance on the  directed graph $G$ from Figure $\ref{AQSPPpicture1}$.
The costs are given as follows. Set $c_{(3,4)} = \epsilon$ for some $0<\epsilon<1$ and ${q_{(1,2),(2,5)}=1}$.
All other linear and interaction costs are zero.
Set for the source and target vertex  $s = 1$ and $t = 5$, respectively.
Clearly, we have a well defined AQSPP instance.
Moreover,  $P=(1,2,5)$  is the unique $s$-$t$ path in $G$,  whose cost is two.
\begin{figure}[H]
	\centering
	\begin{tikzpicture}
	[scale=.8,auto=left,every node/.style={draw, circle}]
	\node (n1) at (0,0) {1};
	\node (n2) at (3,0)  {2};
	\node (n3) at (4.5,3) {3};
	\node (n4) at (1.5,3)  {4};
	\node (n5) at (6,0)  {5};

	\foreach \from/\to in
	{n1/n2,n2/n3,n3/n4,n4/n2,n2/n5}
	\draw [->] (\from) -- (\to);	
	\end{tikzpicture}
	 \caption{Example graph $G$.}
	 \label{AQSPPpicture1}
\end{figure}
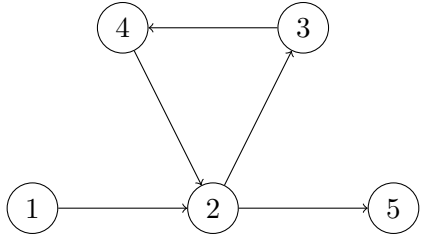

We construct the graph $G'$ from $G$, see Figure $\ref{AQSPPpicture2}$.
It is not difficult to verify that
$(V_{(1,1)},V_{(1,2)},V_{(2,3)},V_{(3,4)},V_{(4,2)},V_{(2,5)},V_{(5,5)})$ is \textcolor{black}{the shortest} $V_{(1,1)}$-$V_{(5,5)}$ path in $G'$, whose cost is $\epsilon$.
However this $V_{(1,1)}$-$V_{(5,5)}$ path does not correspond to a path in $G$, \textcolor{black}{but to a walk}. }

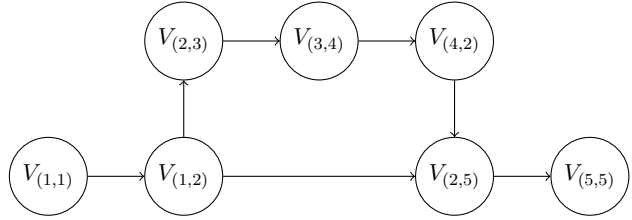
\begin{figure}[H]
	\centering
	\begin{tikzpicture}
	[scale=.6,auto=left,every node/.style={draw, circle,scale=0.8}]
	\node (n1) at (0,0) {$V_{(1,1)}$};
	\node (n2) at (3,0)  {$V_{(1,2)}$};
	\node (n3) at (3,3) {$V_{(2,3)}$};
	\node (n4) at (6,3)  {$V_{(3,4)}$};
	\node (n5) at (9,3)  {$V_{(4,2)}$};
	\node (n6) at (9,0)  {$V_{(2,5)}$};
	\node (n7) at (12,0)  {$V_{(5,5)}$};
	\foreach \from/\to in
	{n1/n2,n2/n3,n3/n4,n4/n5,n5/n6,n6/n7,n2/n6}
	\draw [->] (\from) -- (\to);	
	\end{tikzpicture} \caption{\textcolor{black}{The auxiliary graph $G'$ of $G$.}}
	\label{AQSPPpicture2}
\end{figure}
\end{example}

\subsection{The general adjacent QSPPs}  \label{section:AQSPP_disjoint}

In this section, we prove that the adjacent QSPP that is not restricted to DAGs cannot be approximated unless P=NP.
In particular, we show that the $2$-arc-disjoint paths problem polynomially transforms to the AQSPP.
{\color{black} Rostami} et al.~\cite{rostami2016quadratic} provide a polynomial-time  reduction from  3SAT to the AQSPP.
 Our reduction is considerably shorter (and simpler) than the reduction from \cite{rostami2016quadratic}.

The $k$-arc-disjoint paths problem is defined as follows:
Let $G =(V,A)$ be a directed graph and $(s_{1},t_{1}),\ldots,(s_{k},t_{k})$ pairs of vertices in $G$.
The $k$-arc-disjoint paths problem asks for
pairwise arc-disjoint paths $P_{1},\ldots,P_{k}$ where $P_{i}$ is a $s_{i}$-$t_{i}$ path $(i=1,\ldots,k)$.

An instance of the  $k$-arc-disjoint paths problem can be specified via the tuple $\mathcal{I} = (G,(s_{1},t_{1}),\ldots,(s_{k},t_{k}))$.
Fortune et al.~\cite{fortune1980directed} prove that the $k$-arcs-disjoint paths problem is NP-complete even for $k =2$.
We use this  to prove the main result in this section.
\begin{theorem} \label{AQSPP-NPhard}
The adjacent QSPP on a cyclic digraph cannot be approximated \textcolor{black}{within a constant factor} unless P=NP.
\end{theorem}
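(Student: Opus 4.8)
The plan is to reduce the $2$-arc-disjoint paths problem to the adjacent QSPP on a cyclic digraph, exploiting the fact that, as shown in the previous example, the auxiliary-graph construction may conflate genuine $s$-$t$ paths with walks that reuse vertices. Given an instance $\mathcal{I} = (G,(s_1,t_1),(s_2,t_2))$ of the $2$-arc-disjoint paths problem, I would build a single digraph $\widehat{G}$ in which a low-cost $s$-$t$ path exists if and only if the two required arc-disjoint paths exist in $G$. The natural device is to chain the two subproblems: introduce a fresh source $s$ and sink $t$, connect $s$ to $s_1$, identify (via a connecting arc) $t_1$ with $s_2$, and connect $t_2$ to $t$, so that any $s$-$t$ path in $\widehat{G}$ must traverse first an $s_1$-$t_1$ subpath and then an $s_2$-$t_2$ subpath, using the arcs of $G$.

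The key idea is to use the adjacent quadratic costs to \emph{penalize} an arc of $G$ being used in both the first and the second traversal, thereby enforcing arc-disjointness. First I would set all linear costs to zero (or negligibly small) and assign a large interaction cost $M$ to any pair of adjacent arcs that would signal a re-traversal of a shared arc; alternatively, since the adjacent QSPP only allows interaction between arcs sharing an endpoint, I would duplicate each arc $a \in A$ into a ``first-phase'' copy and a ``second-phase'' copy and place a penalty $M$ on the adjacent pair that corresponds to entering and leaving through the same original arc in both phases. The design goal is that a feasible pair of arc-disjoint paths yields an $s$-$t$ path of cost $0$, whereas any $s$-$t$ path that is forced to reuse an arc incurs cost at least $M$. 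Choosing $M$ to grow (say $M = 2$, or arbitrarily large relative to the trivial optima) then makes the gap between the ``disjoint-paths-exist'' and ``do-not-exist'' cases unbounded, which is exactly what rules out any constant-factor approximation: a $\rho$-approximation algorithm for the AQSPP would distinguish optimum $0$ from optimum $\geq M$, and hence decide the NP-complete $2$-arc-disjoint paths problem in polynomial time, forcing P${}={}$NP.

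The main obstacle I anticipate is the bookkeeping needed to guarantee that the gadget correctly encodes arc-disjointness using \emph{only} adjacent interactions: because non-adjacent pairs must have zero cost, I cannot directly charge for two arbitrary reuses of the same arc unless they happen to be incident at a shared vertex in the traversal order. Making the penalties trigger precisely on the offending configurations—and showing that no cheap ``cheating'' path exists that sidesteps the penalty while still failing to give genuinely arc-disjoint paths—will require a careful case analysis of how an $s$-$t$ path in $\widehat{G}$ decomposes across the two phases. I would therefore spend the bulk of the argument verifying the two directions of the correspondence: that any arc-disjoint solution maps to a zero-cost $s$-$t$ path, and conversely that any $s$-$t$ path of cost below $M$ projects onto two honest arc-disjoint $s_i$-$t_i$ paths in $G$. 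Once this equivalence is established, the inapproximability conclusion follows immediately from the NP-completeness of the $2$-arc-disjoint paths problem due to Fortune et al.~\cite{fortune1980directed}.
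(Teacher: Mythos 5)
Your reduction source (the $2$-arc-disjoint paths problem of Fortune et al.) and the overall architecture (chain an $s_1$-$t_1$ phase and an $s_2$-$t_2$ phase via a connector arc, and arrange a gap between optimum $0$ and optimum $\geq 2$) match the paper exactly. However, the core gadget is not worked out, and the mechanism you propose for it would fail. You want to charge a penalty $M$ via adjacent interaction costs whenever the same original arc is used in both phases; but if you duplicate an arc $(u,v)$ into a first-phase copy $u^1\to v^1$ and a second-phase copy $u^2\to v^2$, these two copies share no endpoint in the traversal, so they are \emph{not} adjacent and no admissible AQSPP cost can couple them. You correctly flag this as ``the main obstacle,'' but you do not resolve it, and the case analysis you defer is precisely where the construction lives or dies.

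The paper's resolution is structural rather than cost-based, and it is the idea you are missing: subdivide each arc $(u,v)\in A$ with a \emph{single} new vertex $N_{uv}$ that is shared by both phase copies, i.e., include arcs $(u^i,N_{uv})$ and $(N_{uv},v^i)$ for $i=1,2$. Then using the arc $(u,v)$ in both phases would force the path to visit $N_{uv}$ twice, which is forbidden for a simple path — so arc-disjointness comes for free from feasibility, with no penalty needed. The adjacent interaction costs are used only for the remaining job: setting $q'_{(u^i,N_{uv})(N_{uv},v^j)}=1$ for $i\neq j$ (and $0$ for $i=j$) forbids switching phases anywhere except at the designated connector $(t^1,\bar s^2)$, and these pairs \emph{are} adjacent at $N_{uv}$, so the cost assignment is legitimate for the AQSPP. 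With that gadget, a zero-cost $s^1$-$\bar t^{\,2}$ path exists iff the two arc-disjoint paths exist, and any infeasible configuration costs at least $2$, which yields the constant-factor inapproximability. Without the shared subdivision vertex (or an equivalent device), your reduction does not go through.
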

\begin{proof}
We provide a polynomial-time reduction from the 2-arc-disjoint paths problem to the adjacent QSPP.
Let
$\mathcal{I} = (G,(s,t),(\bar{s},\bar{t}))$ such that $s \neq \bar{s}$ and $t \neq \bar{t}$ be an instance of the
$2$-arc-disjoint paths problem on the graph $G=(V,A)$.

We construct a directed graph $G'=(V',A')$, where  the vertex set is given as follows
\[
V' = \left \{ v^{1}, v^{2} \;|\;  v\in V \} \cup \{N_{uv} \;|\;  (u,v) \in A \right \}.
\]
In particular,   for each vertex $v \in V$  there are two vertices $ v^{1}, v^{2}$ in $V'$, and for each arc $(u,v) \in A$ there is  the  vertex $N_{uv}\in V'$.
The arc set $A'$ is given by
\[
A' = \left  \{ (u^{i},N_{uv}),\; (N_{uv},v^{i})  \;|\;  (u,v) \in A, \;  i = 1,2 \} \cup \{(t^{1},\bar{s}^{2}) \right \},
\]
i.e., for each arc $(u,v)\in A$ there are two pairs of arcs $(u^{i},N_{uv})$, $(N_{uv},v^{i})$ ${(i=1,2)}$, and  additionally the arc $(t^{1},\bar{s}^{2})$.
	
Now we define the  cost functions $c' : A' \rightarrow \R_+$ and $Q':  A'\times A' \rightarrow \R_+$, $Q'=(q_{ef}')$ as follows.
The linear cost of each arc in $A'$ is zero, i.e., $c'(e) = 0$ for every $e \in A'$.
For every two pairs of arcs $(u^{i},N_{uv})$ and $(N_{uv},v^{i})$, ${(i=1,2)}$ the interaction cost is:
\[
q'_{(u^{i},N_{uv})(N_{uv},v^{j}) } = \begin{cases}
0 & \text{ if } i = j  \\
1 & \text{ if } i \neq j.  \\
\end{cases}
\]
The interaction costs of all other pairs of arcs are zero. Clearly, the non-adjacent arcs have zero interaction costs.
Now, the constructed AQSPP instance is ${\mathcal{I}' = (G',s^{1},\bar{t}^{2},c',Q')}$.

It remains to show that $\mathcal{I}$ is a yes-instance  of the $2$-arc-disjoint paths problem on $G$
if and only if $\mathcal{I}'$ has a $s^{1}$-$\bar{t}^{2}$ path of cost zero.
Now if $\mathcal{I}$ is a yes-instance, then there exists a $s$-$t$ path $P_{1} = (v_{1},v_{2},\ldots,v_{k})$
of length, say, $k-1$ with  $v_1:=s$ and $v_{k} :=t,$ and
a $\bar{s}$-$\bar{t}$ path $P_{2} = (u_{1},u_{2},\ldots,u_{l})$
of length, say, $\l-1$ with  $u_1:=\bar{s}$ and $u_{l} :=\bar{t}$, such that $P_{1}$ and $P_{2}$ are arc-disjoint.
Clearly,  the following ordered set of vertices
\begin{equation}\label{AQSPPcomplexityP}
\begin{aligned}
P' =  (& v_{1}^{1}, \; N_{v_{1},v_{2}}, \;v_{2}^{1}, \;N_{v_{2},v_{3}}, \;v_{3}^{1}, \ldots, v_{k-1}^{1}, \;N_{v_{k-1},v_{k}}, \;v_{k}^{1}, \\
& u_{1}^{2}, \;N_{u_{1},u_{2}}, \;u_{2}^{2}, \;N_{u_{2},u_{3}}, \;u_{3}^{2},\ldots,u_{l-1}^{2}, \;N_{u_{l-1},u_{l}}, \;u_{l}^{2})
\end{aligned}
\end{equation}
is a $v_{1}^{1}$-$u_{l}^{2}$ path of cost zero.

Conversely, a $s^{1}$-$\bar{t}^{2}$ path $P'$ in $G'$ of cost zero
\textcolor{black}{has to consist of a sequence of vertices with superindex one followed by a sequence of vertices with superindex two, as specified in  \eqref{AQSPPcomplexityP}.}
We take the following ordered sets of vertices $P_{1} = (v_{1},v_{2},\ldots,v_{k})$ and $P_{2} = (u_{1},u_{2},\ldots,u_{l})$ in $G$.
It is not difficult to verify that $P_{1}$ and $P_{2}$  are two paths.
It remains to show that $P_{1}$ and $P_{2}$ are arc-disjoint.
Let us assume this is not the case.
Then  there exists  an arc  $(q,w) \in A$ visited by both paths $P_1$ and $P_2$.
From the construction of $P'$, it follows that the arcs $(q^1,N_{q,w})$, $(N_{q,w},w^1)$, $(q^2,N_{q,w})$ and $(N_{q,w},w^2)$
are visited by the path $P'$. This means that the vertex $N_{q,w}$ is visited twice in the $s^{1}$-$\bar{t}^{2}$ path $P'$ in $G'$,
which contradicts to the fact that $P'$ is a walk that does not contain repeated vertices.

Finally, the inapproximability result follows since the objective value of any feasible solution of the constructed AQSPP is either zero or at least two.
\end{proof}

\section{Polynomially solvable cases of the QSPP }\label{section:linearQSPP}

In this section we investigate polynomially solvable cases of the quadratic shortest path problem.
Here we prove, among other things, that all QSPP instances on a directed cycle can be solved by solving an appropriate instance of the SPP.
In Section \ref{sec:costMatr} we consider special cost matrices such as sum and product \textcolor{black}{matrices}.

An instance of the QSPP  is said to be \emph{linearizable}
if there exists a corresponding instance of the SPP {\color{black}with the cost vector  $c'\geq 0$} such that associated  costs are equal i.e.,
\[
C(P,c,Q) = C(P,c'),
\]
for every $s$-$t$ path $P$ in $G$.
 We call $c'$ \textcolor{black}{a} {\em linearization vector} of the QSPP instance.
Linearizable instances for the quadratic assignment problem were considered in e.g., \cite{adams2014linear,cela2014linearizable},
and linearizable instances  for the quadratic minimum spanning tree problem in \cite{custic2015characterization}.

We start this section by proving several basic results.
\begin{lemma} \label{linear:czero}
If the QSPP instance ${\mathcal{I} = (G,s,t,c,Q)}$ is linearizable, and $d$ is a vector such that $c+d \geq 0$,
then the QSPP instance ${\mathcal{I}' = (G,s,t,c+d,Q)}$ is also linearizable.
\end{lemma}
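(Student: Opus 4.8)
The plan is to use the definition of linearizability directly. By hypothesis, the instance $\mathcal{I} = (G,s,t,c,Q)$ is linearizable, so there exists a linearization vector $c' \geq 0$ with
\[
C(P,c,Q) = C(P,c')
\]
for every $s$-$t$ path $P$ in $G$. I want to exhibit a valid linearization vector for $\mathcal{I}' = (G,s,t,c+d,Q)$, that is, a nonnegative vector $\tilde{c}$ such that $C(P,c+d,Q) = C(P,\tilde{c})$ for every $s$-$t$ path $P$.

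The key observation is that the quadratic cost function splits additively in its linear argument. From \eqref{costequation}, for any vector $a$ we have $C(P,a,Q) = \sum_{e,f\in P} q_{ef} + \sum_{e\in P} a_e$, and the linear part $C(P,a) = \sum_{e\in P} a_e$ is itself additive in $a$. Hence
\[
C(P,c+d,Q) = \sum_{e,f\in P} q_{ef} + \sum_{e\in P}(c_e + d_e) = C(P,c,Q) + C(P,d).
\]
Using the linearizability of $\mathcal{I}$ on the first summand gives $C(P,c+d,Q) = C(P,c') + C(P,d) = C(P,c'+d)$, where the last equality again uses additivity of the linear cost. This suggests taking $\tilde{c} := c' + d$.

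The one point requiring care is the nonnegativity constraint built into the definition of a linearization vector: I must check that the candidate $\tilde{c} = c' + d$ satisfies $\tilde{c} \geq 0$. This is the main (and only genuine) obstacle, and it is exactly where the hypothesis $c + d \geq 0$ is needed. I would establish it via
\[
\tilde{c} = c' + d \geq c' + d + c = (c+d) + c' \geq 0,
\]
where the first inequality uses $c \geq 0$ (the linear cost vector of any QSPP instance is nonnegative by assumption in Section~\ref{sect:probl_form}), and the second uses $c + d \geq 0$ together with $c' \geq 0$. With componentwise nonnegativity of $\tilde{c}$ confirmed, $\tilde{c} = c' + d$ is a legitimate linearization vector, so $\mathcal{I}'$ is linearizable, completing the argument.
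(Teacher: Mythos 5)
Your decomposition and the candidate $\tilde{c}=c'+d$ are exactly what the paper does: the paper's proof also sets $c'':=c'+d$ and verifies $C(P,c+d,Q)=C(P,c'')$ by the same additivity computation. That part of your argument is correct and identical in substance.

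The problem is the nonnegativity step, which you rightly single out as the only genuine obstacle but then get wrong. The chain
\[
c'+d \;\geq\; c'+d+c
\]
requires $c\leq 0$, not $c\geq 0$; since $c\geq 0$ the inequality points the other way, so the argument collapses. Worse, the conclusion itself is not salvageable for an arbitrary linearization $c'$: from $c\geq 0$, $c'\geq 0$ and $c+d\geq 0$ one cannot deduce $c'+d\geq 0$. For instance, if some arc $e$ has $c_e=5$, $d_e=-5$ (so $(c+d)_e=0$) but the chosen linearization happens to have $c'_e=0$ (which can occur, e.g., when $e$ lies on no $s$-$t$ path, or after a redistribution of weights as in Section~\ref{sect:DGG}), then $(c'+d)_e=-5<0$. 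So nonnegativity of $c'+d$ depends on which linearization $c'$ one picks, and establishing it would require an additional argument (or a more careful choice of $c'$). For what it is worth, the paper's own proof simply does not address this point at all; your instinct to check it was sound, but the verification as written is invalid.
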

\begin{proof}
Since $\mathcal{I}$ is linearizable, there exists a linear cost vector $c'$ such that
$\sum_{e,f \in P} q_{ef} + \sum_{e\in P} c_{e}  =  \sum_{e\in P} c_{e}'$ for every $s$-$t$ path $P$ in $G$.
Let $c'' := c' + d$, then we have
\begin{align*}
C(P,c+d,Q)=\sum_{e,f \in P} q_{ef} + \sum_{e\in P} (c_{e}+d_{e}) = \sum_{e\in P} c_{e}' +  d_{e} = \sum_{e\in P} c_{e}'' = C(P,c'')
\end{align*}
for every $s$-$t$ path $P$. Thus $\mathcal{I}'$ is also linearizable.
\end{proof}

\begin{lemma}
If  two QSPP instance ${\mathcal{I}_1 = (G,s,t,c_1,Q_1)}$ and ${\mathcal{I}_2 = (G,s,t,c_2,Q_2)}$ are  linearizable,
then the QSPP instance ${\mathcal{I}_3 = (G,s,t,\alpha_1 c_1+ \alpha_2c_2, \alpha_1 Q_1 + \alpha_2 Q_2)}$ is also
linearizable for all nonnegative scalars $\alpha_1$, $\alpha_2$.
\end{lemma}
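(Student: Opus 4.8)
The plan is to show that linearizability is preserved under nonnegative linear combinations by directly combining the two linearization vectors. Since $\mathcal{I}_1$ and $\mathcal{I}_2$ are linearizable, there exist nonnegative cost vectors $c_1'$ and $c_2'$ such that $C(P,c_1,Q_1) = C(P,c_1')$ and $C(P,c_2,Q_2) = C(P,c_2')$ for every $s$-$t$ path $P$ in $G$. The natural candidate linearization vector for $\mathcal{I}_3$ is $c_3' := \alpha_1 c_1' + \alpha_2 c_2'$.

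First I would verify that $c_3'$ is a valid (nonnegative) cost vector: since $\alpha_1,\alpha_2 \geq 0$ and $c_1',c_2' \geq 0$, we immediately get $c_3' \geq 0$. Next, the key computation is to check that the quadratic cost of $\mathcal{I}_3$ along any path equals the linear cost $C(P,c_3')$. This follows because the cost function $C(P,c,Q)$ from \eqref{costequation} is linear in the pair $(c,Q)$: for a fixed path $P$, the quantity $\sum_{e,f \in P} q_{ef} + \sum_{e \in P} c_e$ depends linearly on the entries of $Q$ and $c$. Hence
\[
C(P, \alpha_1 c_1 + \alpha_2 c_2, \alpha_1 Q_1 + \alpha_2 Q_2) = \alpha_1 C(P,c_1,Q_1) + \alpha_2 C(P,c_2,Q_2).
\]
Applying the two linearization hypotheses and then using linearity of the linear cost $C(\cdot,\cdot)$ in its cost argument gives
\[
\alpha_1 C(P,c_1') + \alpha_2 C(P,c_2') = C(P, \alpha_1 c_1' + \alpha_2 c_2') = C(P,c_3').
\]
Chaining these equalities establishes $C(P, \alpha_1 c_1 + \alpha_2 c_2, \alpha_1 Q_1 + \alpha_2 Q_2) = C(P,c_3')$ for every $s$-$t$ path $P$, which is exactly the statement that $\mathcal{I}_3$ is linearizable with linearization vector $c_3'$.

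There is no serious obstacle here; the proof is essentially an exercise in bilinearity bookkeeping. The only points requiring minor care are (i) confirming the nonnegativity of $c_3'$ so that it qualifies as an admissible linearization vector under the definition (which demands $c' \geq 0$), and (ii) being explicit that the additivity $C(P,c_1') + C(P,c_2') = C(P,c_1'+c_2')$ holds because the purely linear cost $\sum_{e \in P} c_e$ is additive in $c$. Both follow directly from the form of \eqref{costequation}, so the argument is short and self-contained.
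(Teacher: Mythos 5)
Your proof is correct and follows essentially the same route the paper intends: the paper's proof is just the remark ``similar to the proof of Lemma \ref{linear:czero},'' i.e.\ take the obvious candidate vector and verify the cost identity by direct computation, which is exactly what you do with $c_3' = \alpha_1 c_1' + \alpha_2 c_2'$ via bilinearity of $C(P,c,Q)$ in $(c,Q)$. Your explicit check of nonnegativity of $c_3'$ is a worthwhile detail the paper leaves implicit.
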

\begin{proof} Similar to the proof of Lemma \ref{linear:czero}.
\end{proof}

An instance of the QSPP  may be linearizable if the underlying graph has special structure and/or the corresponding quadratic cost matrix has special properties.
Let us give a class of the  QSPP instances that is linearizable for any pair of cost matrices  $(Q,c)$.
The directed cycle  $C_{n}^*$ of order $n$ is a graph with the vertex set $ \{v_{1},\ldots,v_{n}\}$
and arc set $ \{(v_{i},v_{i+1}) \;|\; i = 1,\ldots,n\}$ where addition is modulo $n$.
{\color{black} It is not difficult to verify that any  QSPP instance  on $C_{n}^*$ is linearizable. }

Directed cycles are not the only digraphs on  which QSPP instances are linearizable.
However, they seem to be  easiest cases.
In the following sections we show \textcolor{black}{necessary conditions for which a QSPP instance on a directed complete graph is linearizable.
We also show that every instance on a tournament with  four vertices is linearizable.}

\subsection{Special cost matrices} \label{sec:costMatr}

If the interaction cost matrix has a special structure, then the associated QSPP instance may be solved efficiently.
We consider here  two types of cost matrices for which QSPP instances are linearizable.

We say that a matrix $M \in \R^{m\times n}$ is a \emph{sum matrix} generated by vectors
$a \in  \R^{m}$ and $b \in  \R^{n}$ if $M_{i,j} = a_{i} + b_{j}$ for every $i = 1,\ldots,m$ and $j= 1,\ldots,n$.
A matrix is called a \emph{weak sum matrix} if the condition above is not required for the diagonal entries.
It is not difficult to show that  if a sum matrix $M$ of order $n$ is symmetric, then there exists a vector
$a \in  \R^{n}$ such that  $M_{i,j} = a_{i} + a_{j}$ for all $i,j = 1,\ldots,n$.
Similarly, if the weak sum matrix $M$ of order $n$ is symmetric, then there exists a vector  $a \in  \R^{n}$
such that $M_{i,j} = a_{i} + a_{j}$ for all $i,j = 1,\ldots,n, \; i\neq j$.
Recognition of a (weak) sum matrix can be done efficiently.
Sum matrices are also considered in the context of the QAP \cite{cela2014linearizable},
and the quadratic minimum spanning tree problem \cite{custic2015characterization}.

The following result shows that  symmetric weak sum matrices provide linearizable QSPP instances
\textcolor{black}{on graphs where all $s$-$t$ paths have the same length.}
\begin{proposition} \label{prop:weakSum}
Let $\mathcal{I} = (G,s,t,c,Q)$ be a QSPP instance.
If every $s$-$t$ path in $G$ has the same length and $Q$ is a symmetric weak sum matrix, then $\mathcal{I}$ is linearizable.
\end{proposition}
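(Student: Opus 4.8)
The plan is to use the weak-sum structure of $Q$ to collapse the quadratic term into a purely linear one, where the common path length is exactly what makes this collapse uniform across all $s$-$t$ paths. Since $Q$ is a symmetric weak sum matrix of order $m$ with zero diagonal, there is a vector $a \in \R^{m}$ (indexed by the arcs) with $q_{ef} = a_{e} + a_{f}$ for all arcs $e \neq f$, while $q_{ee} = 0$. Writing $L$ for the common length of every $s$-$t$ path, I would take the candidate linearization vector to be $c'_{e} := c_{e} + 2(L-1)a_{e}$ and then verify the cost identity.

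First I would compute the quadratic cost of a fixed $s$-$t$ path $P$. Because the diagonal of $Q$ vanishes, $\sum_{e,f \in P} q_{ef} = \sum_{e,f \in P,\, e \neq f} (a_{e} + a_{f})$, and in this ordered double sum each $a_{e}$ with $e \in P$ occurs exactly $2(L-1)$ times (it is paired with each of the other $L-1$ arcs of $P$, in either order). Hence $\sum_{e,f \in P} q_{ef} = 2(L-1)\sum_{e \in P} a_{e}$, so that $C(P,c,Q) = \sum_{e \in P}\big(c_{e} + 2(L-1)a_{e}\big) = \sum_{e \in P} c'_{e} = C(P,c')$. This is where the hypothesis is essential: the multiplier $2(L-1)$ is the same constant for every $s$-$t$ path, so the per-arc coefficient $c'_{e}$ does not depend on which path $e$ happens to lie on. (For $L=1$ the quadratic term is empty and $c'=c$, so the claim is immediate.)

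The step that needs care, and the one I expect to be the main obstacle, is showing that the linearization vector can be taken \emph{nonnegative}, as the definition of linearizability demands. Here I would exploit nonnegativity of $Q$: since $q_{ef} = a_{e} + a_{f} \geq 0$ for all $e \neq f$, at most one coordinate of $a$ can be negative, for two negative coordinates $a_{e},a_{f}$ would force $q_{ef}<0$. Consequently at most one entry of $c'$, say $c'_{e^{*}}$, is negative. If $e^{*}$ lies on no $s$-$t$ path I would simply reset $c'_{e^{*}}:=0$, which changes no path cost. If $e^{*}$ does lie on some $s$-$t$ path, I would remove the negativity by adding a path-neutral correction: a ``tension'' $w_{(i,j)} = \pi_{j} - \pi_{i}$ induced by node potentials $\pi$ with $\pi_{s} = \pi_{t}$ satisfies $\sum_{e \in P} w_{e} = \pi_{t} - \pi_{s} = 0$ for every $s$-$t$ path, hence preserves all path costs while it can be used to lift $c'_{e^{*}}$ up to zero.

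The delicate point is that such a potential correction must not drive any other coordinate negative; feasibility of the accompanying difference system $\pi_{i} - \pi_{j} \le c'_{(i,j)}$ together with $\pi_{s} = \pi_{t}$ amounts to the absence of negative $c'$-length cycles in the subgraph of arcs lying on $s$-$t$ paths, and I would check that this reduces to the already-established fact that every $s$-$t$ path has nonnegative $c'$-cost, equal to $C(P,c,Q)\ge 0$ since $Q,c\ge 0$. I expect the write-up to hinge on disposing of this single negative arc cleanly; everything else is the short, purely algebraic reduction carried out above.
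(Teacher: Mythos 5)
Your first two paragraphs are exactly the paper's proof: extract $a$ from the weak sum structure, observe that each $a_e$ appears $2(L-1)$ times in the double sum over a path of the common length $L$, and set $c'_e = c_e + 2(L-1)a_e$. The paper stops there and does not address nonnegativity of $c'$ at all, even though its definition of linearizability requires $c'\ge 0$; so your third and fourth paragraphs go beyond the published argument. Your observation that nonnegativity of $Q$ forces at most one coordinate of $a$ (hence of $c'$) to be negative is correct and a nice start, but the patch you sketch is not yet a proof. Feasibility of the difference system $\pi_i-\pi_j\le c'_{(i,j)}$ with $\pi_s=\pi_t$ is equivalent to the absence of negative cycles in an auxiliary graph that contains \emph{all} directed cycles of the relevant subgraph, including ones avoiding $s$ and $t$ entirely; since $G$ is not assumed acyclic, the nonnegativity of $s$-$t$ path costs does not by itself rule these out, so the claimed reduction "this reduces to $C(P,c,Q)\ge 0$" is unjustified as stated. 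If you want a complete argument you must either close that gap (e.g., by showing every cycle consisting of arcs on $s$-$t$ paths has nonnegative $c'$-cost, using the bound $a_f\ge|a_{e^*}|$ for $f\ne e^*$), or follow the paper in treating the sign condition as a non-issue -- but then you should say so explicitly rather than leave the step as "I would check."
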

\begin{proof}
Suppose that every $s$-$t$ path in $G$ has length $L$.
Since $Q$ is a symmetric weak sum matrix,  there exists a vector  $a \in  \R^{m}$ such that
$q_{e,f} = a_{e} + a_{f}$ for all $e,f = 1,\ldots,m, \; e\neq f$.
Let $P$ be a $s$-$t$ path in $G$ with the arc set  $\{ e_i :~i=1,\ldots,L \}$.
Then the cost of $P$ is:
\begin{align*}
C(P,c,Q) & = \sum_{i =1}^{L} \sum_{j =1}^{L} q_{e_{i},e_{j}} + \sum_{i =1}^{L} c_{e_{i}} = \sum_{i =1}^{L} \sum_{j =1, i \neq j }^{L} (a_{e_{i}}+a_{e_{j}})  + \sum_{i =1}^{L} c_{e_{i}} \\
& =  2 (L-1) \sum_{i =1}^{L} a_{e_{i}}  + \sum_{i =1}^{L} c_{e_{i}}=\sum_{i =1}^{L}  2 (L-1) a_{e_{i}} + c_{e_{i}}.
\end{align*}
Now,  define the linear cost $c'_{e} := 2(L-1)a_{e} + c_{e}$ for every arc $e=1,\ldots,m$.
Thus, $C(P,c,Q) = C(P,c')$ for every $s$-$t$ path $P$ in $G$.
\end{proof}

Let us give two examples of graphs whose all $s$-$t$ paths have constant length.
\begin{example} \label{def:gridGraph}{\rm
The \emph{directed grid graph} $G_{p,q} = (V,A)$ is defined as follows. The set of vertices and the set of arcs are given as follows:
\begin{equation} \label{GppAv}
\begin{array}{ll}
V =& \left \{ v_{i,j} \;|\; 1 \leq i \leq p, \; 1 \leq j \leq q \right \}, \\[1ex]
A =&  \left  \{ (v_{i,j},v_{i+1,j}) \;|\; 1 \leq i \leq p - 1, \; 1 \leq j \leq q  \right \} \\[1ex]
 &\cup   \left  \{ (v_{i,j},v_{i,j+1}) \;|\; 1 \leq i \leq p, \; 1 \leq j \leq q-1  \right \}.
\end{array}
\end{equation}
Note that $|V| = pq$ and $|A| = 2pq-p-q$.
If $s = (1,1)$ and $t = (p,q)$, then every $s$-$t$ path has length $p+q-2$. }
\end{example}

\begin{example}{\rm
The \emph{directed hypercube graph} $H_{n}$ is defined as follows. There is a vertex for each binary string of length $n$, there is an arc $(u,v)$
if the vertices $u$ and $v$ differ in exactly one bit position and the binary value of $u$ is less than the binary value of $v$.
Note that $H_{n}$ has $2^{n}$ vertices, $ 2^{n-1}n$ arcs.
If $s$ is an all-zeros string and $t$ is an all-ones string, then every $s$-$t$ path has length $n$.}
\end{example}

Let us consider  another special cost matrix. A matrix $M \in \R^{m\times m}$ is called a \emph{symmetric product matrix} if $M = aa^{\mathrm T}$ for some vector $a \in  \R^{m}$.
A nonnegative product matrix with integer values plays a role in the Wiener maximum  QAP, see \cite{ccela2011wiener}.

If $Q+\text{Diag}(c)$ is a positive semidefinite matrix, then the tuple $(G,s,t,c,Q)$ is a convex QSPP instance.
Here, the 'Diag' operator maps a $m$-vector to the diagonal matrix, by placing the vector  on the diagonal of the $m\times m$ matrix.
In \cite{rostami2016quadratic}, it is shown that the convex QSPP is APX-hard,
but  can be approximated within a factor of $|V|$. The following result shows that one can solve the QSPP efficiently whenever $Q+\text{Diag}(c)$
is a symmetric product matrix, thus positive semidefinite matrix of rank one.
\begin{proposition} \label{prop:productmatrix}
Let $\mathcal{I} = (G,s,t,c,Q)$ be a QSPP instance.
If $Q + {\rm Diag}(c)$ is a nonnegative symmetric product matrix,
then $\mathcal{I}$ is solvable in ${\mathcal O}(m+n\log n)$ time.
\end{proposition}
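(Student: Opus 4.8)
The plan is to exploit the rank-one structure of $M := Q + {\rm Diag}(c)$ to show that the quadratic cost of every path is the \emph{square} of a linear cost, and then to minimize that square by a single shortest-path computation. First I would write $M = aa^{\mathrm T}$ for some $a\in\R^{m}$, as guaranteed by the product-matrix hypothesis, and read off the entries of $Q$ and $c$. Since $Q$ has zero diagonal, the diagonal of $M$ is exactly ${\rm Diag}(c)$, so $c_{e} = a_{e}^{2}$ for every arc $e$, while the off-diagonal entries give $q_{e,f} = a_{e}a_{f}$ for $e\neq f$.

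Next I would compute the cost of an $s$-$t$ path $P$ with characteristic vector $x\in\{0,1\}^{m}$. Because $x$ is binary we have $x_{e}^{2} = x_{e}$, so expanding $(a^{\mathrm T}x)^{2} = \sum_{e,f} a_{e}a_{f}x_{e}x_{f}$ and splitting off the diagonal terms yields
\[
(a^{\mathrm T}x)^{2} = \sum_{e} a_{e}^{2}x_{e} + \sum_{e\neq f} a_{e}a_{f}x_{e}x_{f} = c^{\mathrm T}x + x^{\mathrm T}Qx = C(P,c,Q),
\]
where the middle equality uses $c_{e}=a_{e}^{2}$, $q_{e,f}=a_{e}a_{f}$ $(e\neq f)$, and the zero diagonal of $Q$. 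Hence the total cost of any $s$-$t$ path equals $\left(\sum_{e\in P} a_{e}\right)^{2}$.

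The observation that turns this into a shortest-path problem is nonnegativity. Since $M = aa^{\mathrm T}$ is entrywise nonnegative, any two entries satisfy $a_{e}a_{f}\geq 0$, so all nonzero entries of $a$ share a common sign; replacing $a$ by $-a$ (which leaves $aa^{\mathrm T}$ unchanged) I may assume $a\geq 0$. Then $a^{\mathrm T}x = \sum_{e\in P} a_{e}\geq 0$ for every path, and since $t\mapsto t^{2}$ is monotone increasing on $\R_{+}$, minimizing $C(P,c,Q) = (a^{\mathrm T}x)^{2}$ over all $s$-$t$ paths is equivalent to minimizing the \emph{linear} objective $\sum_{e\in P} a_{e}$.

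Finally I would solve this as an SPP instance with nonnegative arc weights $a_{e}$. Because all weights are nonnegative, Dijkstra's algorithm applies and, implemented with a Fibonacci heap, returns an optimal $s$-$t$ path in $\mathcal{O}(m + n\log n)$ time, which is then optimal for $\mathcal{I}$ as well. I do not expect a genuine obstacle here: the only point requiring care is the sign argument establishing $a\geq 0$, since this is precisely what licenses passing from the squared objective to an honest shortest-path problem (and also explains why $\mathcal{I}$ is efficiently solvable without being linearizable, as the cost is a square rather than a linear function of $x$).
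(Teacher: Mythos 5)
Your proposal is correct and follows essentially the same route as the paper: factor $Q+\mathrm{Diag}(c)=aa^{\mathrm T}$, observe that the cost of a path with characteristic vector $x$ is $(a^{\mathrm T}x)^{2}$, and minimize the linear objective $a^{\mathrm T}x$ with Dijkstra's algorithm in $\mathcal{O}(m+n\log n)$. You are in fact slightly more careful than the paper, which simply asserts $a\in\R^{m}_{+}$, whereas you justify the sign normalization and the monotonicity step that reduces the squared objective to an ordinary shortest-path problem.
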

\begin{proof}
Since $Q + \text{Diag}(c)$ is a nonnegative symmetric product matrix,
there exists a vector  $a \in  \R^{m}_{+}$ such that $Q + {\rm Diag}(c) = aa^{\mathrm T}$.
Let $x \in \{0,1\}^{m}$  be the characteristic vector of a $s$-$t$ path $P$ in $G$.
The cost of this path satisfies
\[
C(P,c,Q) = x^{\mathrm T}(Q + \text{Diag}(c))x = x^{\mathrm  T}aa^{\mathrm  T}x = (x^{\mathrm T}a)^2.
\]
Let us define the linear cost vector $c'\in  \R^{m}_{+}$ by taking $c'_{e} = a_{e}$ for every $e$.
Thus, we have $C(P,c,Q) = C(P,c')^2$ for every $s$-$t$ path $P$ in $G$, and the complexity of solving
 the shortest path problem by Dijkstra's algorithm is  ${\mathcal O}(m+n\log n)$.
\end{proof}

\section{The QSPP on  complete digraphs}\label{sect:completeGraphs}

In this section we analyze the QSPP on the complete symmetric digraph $K_n^*$, that is a digraph in which every pair of vertices
is connected by a bidirectional edge.  It is trivial to solve the QSPP on $K_{n}^*$ for $n\leq 3$.
Here, we provide sufficient and necessary conditions for a QSPP instance on $K_4^*$ to be linearizable.
We also provide several properties of linearizable  QSPP instances on  $K_n^*$ with $n \geq 5$.

{\bf Assumptions.} In this section we assume that the following trivial arcs are removed from $K_n^*$:
the incoming arcs to $s$, outgoing arcs from $t$, and the arc $(s,t)$.
For example, the removal of these arcs from $K_{4}^*$ results in the simplified graph, see Figure $\ref{linearization}$.
Note that  removing  the arc $(s,t)$ does not change the linearizability of an instance.
 Further, we  assume  that  the cost vector $c$ is {\color{black} the all-zero vector} (see Lemma \ref{linear:czero}), and that
 interaction costs of pairs of arcs that can not be  together included in any $s$-$t$ path is zero.
 For example, the interaction cost of arcs $(v_1,v_2)$ and $(v_3,v_2)$ in graph from Figure $\ref{linearization}$ is zero.
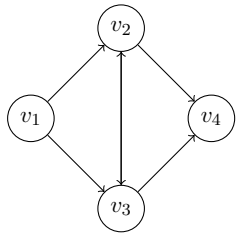
\begin{figure}[H]
	\centering
	\begin{tikzpicture}
	[scale=.6,auto=left,every node/.style={draw, circle,scale=0.8}]
	\node (n1) at (0,0) {$v_{1}$};
	\node (n2) at (2,2)  {$v_{2}$};
	\node (n3) at (2,-2) {$v_{3}$};
	\node (n4) at (4,0)  {$v_{4}$};
	\foreach \from/\to in
	{n1/n2,n1/n3,n2/n3,n3/n2,n3/n4,n2/n4}
	\draw [->] (\from) -- (\to);	
	\end{tikzpicture} \caption{Simplified $K_{4}^*$ with $s=v_{1},$ $t= v_{4}$.}
	\label{linearization}
\end{figure}
Let ${\mathcal{I} = (K_{n}^*,s,t,c,Q)}$ ($n\geq 4$) be an instance of the QSPP.
We classify  $s$-$t$ paths  by their lengths for that instance.
This classification leads us to necessary and/or sufficient conditions for a QSPP instance to be linearizable.

Let $\mathcal{P}_{k}$ denotes the set of $s$-$t$ paths of length $k$ for $k \in \{2,\ldots,n-1\}$.
The total cost of all $s$-$t$ paths of length $k$ is  $\mathcal{CP}_{k} = \sum_{P \in \mathcal{P}_{k}}C(P,c,Q)$.
The number of $s$-$t$ paths of the length $k$ is $|\mathcal{P}_{k}| = {n-2 \choose k-1}\cdot (k-1)!$.
In what follows, we  show that $\mathcal{CP}_{k}$ is bounded from above by $\mathcal{CP}_{k+1}$ for  $k=3,\ldots, n-2$, and several other related results.

\begin{proposition}\label{linear:complete_average}
Let  ${\mathcal{I} = (K_{n}^*,s,t,c,Q)}$ be a QSPP instance and $n \geq 4$.
Then the average cost of all $s$-$t$ paths of the length $k$ is not greater than the average cost of all $s$-$t$ paths of the length $k+1$, i.e.,
$$\frac{1}{|\mathcal{P}_{k}|}\mathcal{CP}_{k} \leq \frac{1}{|\mathcal{P}_{k+1}|}\mathcal{CP}_{k+1},$$
for $k = 3,\ldots,n-2.$
\end{proposition}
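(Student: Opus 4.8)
The plan is to reduce the claim to a purely combinatorial inequality about how often a fixed pair of arcs lies on a path, and then exploit $Q\ge 0$. Since in this section $c$ is assumed to be the all-zero vector, we have $C(P,c,Q)=\sum_{e,f\in P}q_{ef}$, and because $Q$ is symmetric with zero diagonal this equals the sum of $2q_{ef}$ over unordered pairs of distinct arcs on $P$. Summing over $P\in\mathcal{P}_k$ and exchanging the order of summation gives $\mathcal{CP}_k=\sum_{\{e,f\}}2q_{ef}\,N_k(e,f)$, where $N_k(e,f)$ denotes the number of length-$k$ $s$-$t$ paths containing both $e$ and $f$ (pairs that cannot lie on a common path contribute $N_k=0$). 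Because every $q_{ef}\ge 0$, it suffices to prove the pointwise inequality $N_k(e,f)/|\mathcal{P}_k|\le N_{k+1}(e,f)/|\mathcal{P}_{k+1}|$ for each compatible pair, and then multiply by $2q_{ef}$ and sum.

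Using $|\mathcal{P}_k|=\binom{n-2}{k-1}(k-1)!$ one checks that $|\mathcal{P}_{k+1}|=(n-1-k)|\mathcal{P}_k|$, so the target is equivalent to $N_{k+1}(e,f)\ge (n-1-k)\,N_k(e,f)$. I would prove this by an insertion/deletion double count restricted to paths containing $\{e,f\}$. Viewing a length-$k$ path as $s$ followed by an ordered $(k-1)$-tuple of interior vertices followed by $t$, one obtains a length-$(k+1)$ path by inserting one of the remaining $n-1-k$ interior vertices into one of the $k$ gaps; conversely one recovers a length-$k$ path from a length-$(k+1)$ path by deleting an interior vertex and splicing its two incident arcs. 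A length-$k$ path containing $e,f$ admits insertions into any of the $k-2$ gaps that are not the arcs $e,f$, giving $(k-2)(n-1-k)$ admissible insertions; from the other side, a length-$(k+1)$ path containing $e,f$ remains good under deletion only when the deleted interior vertex is not an endpoint of $e$ or $f$. Equating total incidences yields $N_k(e,f)\,(k-2)(n-1-k)=\sum_{P'}(\text{number of safe interior vertices of }P')$, the sum being over length-$(k+1)$ paths $P'$ containing $e,f$.

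The heart of the argument is the bound that every such $P'$ has at most $k-2$ safe interior vertices, i.e.\ at least two of its interior vertices are endpoints of $e$ or $f$. I would verify this by a short case analysis, which is where the assumptions on $K_n^*$ enter. If $e,f$ are vertex-disjoint they have four endpoints, and since $s$ has no in-arcs (so it can only be a tail) and $t$ has no out-arcs (so it can only be a head), at most one endpoint equals $s$ and at most one equals $t$; hence at least two endpoints are interior. If $e,f$ share a vertex, say $e=(a,b)$ and $f=(b,c)$, then $b$ is necessarily interior, and the only way to have fewer than two interior endpoints would be $a=s$ and $c=t$, forcing $P'=(s,b,t)$ of length $2$, which is impossible since $k+1\ge 4$. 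This degenerate configuration is precisely what the hypothesis $k\ge 3$ excludes, explaining why the statement begins at $k=3$. Given the bound, the incidence identity gives $N_k(e,f)(k-2)(n-1-k)\le N_{k+1}(e,f)(k-2)$; dividing by $k-2\ge 1$ settles the pointwise inequality, and summing against $2q_{ef}\ge 0$ proves the proposition. I expect the case analysis ruling out the $s\to b\to t$ configuration to be the only genuinely delicate point.
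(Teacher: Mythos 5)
Your proof is correct, but it takes a genuinely different route from the paper's. The paper partitions the unordered arc pairs into six symmetry classes $T_1,\ldots,T_6$ (according to adjacency and to whether each arc leaves $s$ or enters $t$), computes in closed form the number $t_{i,k}$ of length-$k$ paths through a pair in each class, and then reads off monotonicity of the resulting explicit expression for $\frac{1}{|\mathcal{P}_k|}\mathcal{CP}_k$ as a function of $k\ge 3$. You instead prove the pointwise inequality $N_k(e,f)/|\mathcal{P}_k|\le N_{k+1}(e,f)/|\mathcal{P}_{k+1}|$ for every compatible pair by an insertion--deletion double count, the crux being that every length-$(k+1)$ path through $e$ and $f$ has at least two interior vertices among the endpoints of $e,f$; your case analysis there is sound (at most one endpoint can be $s$ and at most one can be $t$ since a path uses a single out-arc of $s$ and a single in-arc of $t$, and the adjacent-pair degeneracy forces the length-$2$ path $(s,b,t)$), and the bijection between admissible insertions and (path, safe vertex) pairs is legitimate because the spliced arc can never coincide with $e$ or $f$ on a simple path. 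What each approach buys: the paper's explicit formula \eqref{linear:complete_averagepathcost} is reused downstream (Corollary \ref{linear:complete_upper} and the subsequent discussion of which $s_i$ contribute to which $\mathcal{CP}_k$), so the closed forms are not wasted effort; your argument is shorter on case-by-case computation, establishes the stronger pointwise statement, and isolates cleanly why $k=2$ must be excluded (division by $k-2$ and the failure of the pointwise bound for pairs $(s,b),(b,t)$), which the paper only observes afterwards via the $T_1$ class.
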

\begin{proof}
We will derive an expression for $\frac{1}{|\mathcal{P}_{k}|}\mathcal{CP}_{k}$ ($k \geq 2$) in terms of the interaction costs.
Then, the claim follows from the fact that the expression is an increasing function for $k \geq 3$.
	
Given an arc $e = (i,j)$, we define $h(e) = i$ to be the head vertex $i$, and $t(e) = j$ to be the tail vertex  $j$. Let
\begin{equation} \label{linear:complete_average_H}
H = \{ e \in A \;|\; h(e) = s \text{ or } t(e) = t\}
\end{equation}
be the set of arcs either leaving $s$ or entering $t$.
Let $S = \left \{ \{e,f\} \;|\;  t(e) = h(f) \text{ or } h(e) = t(f) \right \}$ be the set of  distinct unordered pairs of adjacent arcs.
Based on the sets $H$ and $S$, we define the following sets of distinct unordered pairs of arcs:
\begin{align*}
T_{1} = \{\{e,f\} \in S  \;|\; e\in H \text{ and } f \in H \}, \quad T_{2} = \{\{e,f\} \notin S  \;|\; e\in H \text{ and } f \in H \},\\
T_{3} = \{\{e,f\}  \in S \;|\; e\in H \text{ and } f \notin H \}, \quad T_{4} = \{\{e,f\}  \notin S \;|\; e\in H \text{ and } f \notin H \},\\
T_{5} = \{\{e,f\} \in S  \;|\; e\notin H \text{ and } f \notin H \}, \quad T_{6} = \{\{e,f\} \notin S  \;|\; e\notin H \text{ and } f \notin H \}.
\end{align*}
Clearly,  $H$ and its complement partition the arc set, and $T_{1},\ldots,T_{6}$ partition the arc pairs in $K_{n}^*$.
The sum of interaction costs over the pairs of arcs in $T_{i}$ is\\ ${s_{i} = 2\cdot \sum_{\{e,f\}\in T_{i}}  q_{ef}}$ for $i = 1,\ldots,6$.
Note that ${\sum_{i=1}^{6}s_{i}= u^{\mathrm T}Qu}$, where {\color{black}$u \in \R^m$} is the vector of all-ones.

It holds that every  pair of arcs $\{e,f\} \in T_{i}$  ($i \in \{1,\ldots,6\}$),
which is included in at least one $s$-$t$ path,
is contained in the same number, denoted by $t_{i,k}$, of $s$-$t$ paths  of length $k$.
In particular we have
\begin{align*}
&t_{1,k} = \begin{cases}
1 & \text{ if } k = 2 \\
0 & \text{ otherwise},
\end{cases} & \qquad
t_{2,k} = \begin{cases}
{n-4 \choose k-3}\cdot (k-3)! & \text{ if } k \geq 3 \\
0 & \text{ otherwise},
\end{cases} \\
&t_{3,k} = \begin{cases}
{n-4 \choose k-3}\cdot (k-3)! & \text{ if } k \geq 3 \\
0 & \text{ otherwise},
\end{cases}
& \qquad
t_{4,k} = \begin{cases}
{n-5 \choose k-4}\cdot (k-3)! & \text{ if } k \geq 4 \\
0 & \text{ otherwise},
\end{cases}\\
&t_{5,k} = \begin{cases}
{n-5 \choose k-4}\cdot (k-3)!  & \text{ if } k \geq 4 \\
0 & \text{ otherwise},
\end{cases}
& \qquad t_{6,k} = \begin{cases}
{n-6 \choose k-5}\cdot (k-3)!  & \text{ if } k \geq 5 \\
0 & \text{ otherwise}.
\end{cases}
\end{align*}
For $k \geq 2$, the average cost of all $s$-$t$ paths of length $k$ can be written as:
\begin{align}
& \frac{1}{|\mathcal{P}_{k}|}\mathcal{CP}_{k}  = \frac{1}{|\mathcal{P}_{k}|} \sum_{i=1}^{6}t_{i,k} \cdot s_{i} \label{linear:complete_averagepathcost}\\
& = \frac{1}{n-2} \cdot s_{1}\cdot  \mathbbm{1}_{\{k=2\}} + \frac{1}{(n-2)(n-3)}\cdot (s_{2}+s_{3} )\cdot \mathbbm{1}_{\{k\geq 3\}}  \nonumber \\
& + \frac{(k-3)}{(n-2)(n-3)(n-4)}\cdot (s_{4}+s_{5}) \cdot \mathbbm{1}_{\{k\geq 4\}}  +\frac{(k-3)(k-4)}{(n-2)(n-3)(n-4)(n-5)}\cdot s_{6} \cdot \mathbbm{1}_{\{k\geq 5\}}.  \nonumber
\end{align}
Here, $\mathbbm{1}_{A}$ is the indicator function defined as $\mathbbm{1}_{x} = 1$ if condition $x$ is true, and zero otherwise.
It is clear that $\frac{1}{|\mathcal{P}_{k}|}\mathcal{CP}_{k}$ is an increasing function in $k \geq 3$  and this finishes the proof.
\end{proof}
Note that from \eqref{linear:complete_averagepathcost} it follows that one can easily compute $\mathcal{CP}_{k}$ for $k\geq 2$.
As  direct consequences of the previous proposition, we have the following two results.
\begin{corollary} \label{linear:complete_upper}
\begin{enumerate}
\item[a)] Let  ${\mathcal{I} = (K_{n}^*,s,t,c,Q)}$ be a QSPP instance and $n \geq 4$. Then
$$\mathcal{CP}_{k} \leq \frac{1}{n-k-1} \mathcal{CP}_{k+1},$$
for $k = 3,\ldots,n-2.$

\item[b)] Let  ${\mathcal{I} = (K_{n}^*,s,t,c,Q)}$ be a QSPP instance and $n \geq 7$. Then
$$ \mathcal{CP}_{k} \leq (n-k)\cdot \frac{k-3}{k-5} \cdot \mathcal{CP}_{k-1}$$
for $k = 6,\ldots,n-1.$

\end{enumerate}
\end{corollary}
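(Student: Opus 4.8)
The plan is to obtain both inequalities directly from Proposition \ref{linear:complete_average}, combined with the closed form of the path count, namely $|\mathcal{P}_{k}| = {n-2 \choose k-1}(k-1)! = (n-2)!/(n-k-1)!$. For part a) I would rewrite the average-cost inequality $\frac{1}{|\mathcal{P}_{k}|}\mathcal{CP}_{k} \leq \frac{1}{|\mathcal{P}_{k+1}|}\mathcal{CP}_{k+1}$ as $\mathcal{CP}_{k} \leq \frac{|\mathcal{P}_{k}|}{|\mathcal{P}_{k+1}|}\mathcal{CP}_{k+1}$ and compute the ratio of path counts: since $|\mathcal{P}_{k}| = (n-2)!/(n-k-1)!$, one gets $\frac{|\mathcal{P}_{k}|}{|\mathcal{P}_{k+1}|} = (n-k-2)!/(n-k-1)! = \frac{1}{n-k-1}$, which is exactly the asserted factor. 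This part is essentially bookkeeping.

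Part b) is the substantive one, because it bounds $\mathcal{CP}_{k}$ from above by a multiple of the \emph{smaller} index $\mathcal{CP}_{k-1}$; this runs opposite to the monotonicity of the average cost, so Proposition \ref{linear:complete_average} cannot be applied verbatim. Instead I would use the explicit expression \eqref{linear:complete_averagepathcost}. For $k \geq 5$ every indicator term except the first is active, so the average cost $g(k) := \frac{1}{|\mathcal{P}_{k}|}\mathcal{CP}_{k}$ is the quadratic polynomial in $k$ given by $g(k) = \alpha + (k-3)\beta + (k-3)(k-4)\gamma$, where $\alpha = \frac{s_2+s_3}{(n-2)(n-3)}$, $\beta = \frac{s_4+s_5}{(n-2)(n-3)(n-4)}$ and $\gamma = \frac{s_6}{(n-2)(n-3)(n-4)(n-5)}$ are nonnegative and independent of $k$ (nonnegative because $Q$ is nonnegative, hence each $s_i \geq 0$). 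Using $\frac{|\mathcal{P}_{k}|}{|\mathcal{P}_{k-1}|} = n-k$ (the count of part a) with the index shifted), the target inequality reduces, after cancelling the common positive factor $n-k$, to the purely scalar claim $g(k) \leq \frac{k-3}{k-5}g(k-1)$.

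To close, I would clear the denominator $k-5$ (which is positive since $k \geq 6$) and verify the equivalent inequality $(k-5)g(k) \leq (k-3)g(k-1)$ by expanding both sides. The decisive feature is that the terms carrying $\gamma$ coincide on both sides, equal to $(k-3)(k-4)(k-5)\gamma$, and cancel; what is left is $(k-3)g(k-1) - (k-5)g(k) = 2\alpha + (k-3)\beta$, which is nonnegative since $\alpha,\beta \geq 0$ and $k \geq 6$. The range $k = 6,\ldots,n-1$, nonempty precisely when $n \geq 7$, simultaneously ensures $k-5 \geq 1$ and places $g(k-1)$ in the regime $k-1 \geq 5$ where the quadratic formula is valid.

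The main obstacle is the conceptual one of recognizing that part b) does not follow from the monotonicity in Proposition \ref{linear:complete_average}, but instead requires the exact quadratic shape of the average-cost function; once that shape is available, the cancellation of the $\gamma$ term renders the remaining inequality immediate.
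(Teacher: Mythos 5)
Your proof is correct and rests on the same foundation as the paper's: part a) is read off from Proposition \ref{linear:complete_average}, and part b) uses the $T_i$-decomposition and the counts $t_{i,k}$ from its proof. The only cosmetic difference is in the last step of b): the paper bounds each ratio $t_{i,k}/t_{i,k-1}$ individually by the worst case $(n-k)\tfrac{k-3}{k-5}$ and sums, whereas you aggregate into the quadratic $g(k)=\alpha+(k-3)\beta+(k-3)(k-4)\gamma$ and cancel the $\gamma$-term — two equivalent ways of exploiting the same nonnegativity of the $s_i$.
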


\noindent {\em Proof.}
The first part follows directly from Proposition \ref{linear:complete_average}.
To show the second part, note that $t_{i,k}$ in the proof of Proposition \ref{linear:complete_average} satisfy
$\frac{t_{2,k}}{t_{2,k-1}} = \frac{t_{3,k}}{t_{3,k-1}}  = n-k$, $\frac{t_{4,k}}{t_{4,k-1}} = \frac{t_{5,k}}{t_{5,k-1}} = (n-k)\frac{k-3}{k-4}$,
$\frac{t_{6,k}}{t_{6,k-1}} = (n-k) \frac{k-3}{k-5}$ for $k = 6,\ldots,n-1.$
Thus, $\frac{t_{i,k}}{t_{i,k-1}} \leq (n-k) \frac{k-3}{k-5} $ from where it follows:
\begin{align*}
\mathcal{CP}_{k}  &=  \sum_{i=2}^{6}t_{i,k} \cdot s_{i} =\sum_{i=2}^{6}\frac{t_{i,k}}{t_{i,k-1}} \cdot t_{i,k-1}\cdot s_{i}    \\
& \leq (n-k)\cdot \frac{k-3}{k-5} \cdot \sum_{i=2}^{6} t_{i,k-1}\cdot s_{i} = (n-k)\cdot \frac{k-3}{k-5}  \cdot \mathcal{CP}_{k-1}.  \qquad\qquad\qquad \qed
\end{align*}
From Corollary \ref{linear:complete_upper}, it follows that  $\mathcal{CP}_{2}$ is not bounded by $\mathcal{CP}_{k}$ for $k \geq 3$.
This is due to the fact that the interaction costs of arc pairs in $T_{1}$ only contribute to $\mathcal{CP}_{2}$.
\textcolor{black}{In particular, $\mathcal{CP}_{2}= t_{1,2}s_{1}$ and $\mathcal{CP}_{3}= t_{2,3}s_{2}+t_{3,3}s_{3}$.}
The second inequality in Corollary \ref{linear:complete_upper} shows that
$\mathcal{CP}_{k}$  for $k = 4,5$ can be arbitrarily bigger than $\mathcal{CP}_{k-1}$.
This is because the interaction costs of pairs in $T_{4}$ and $T_{5}$ (respectively $T_{6}$) only contribute to the costs of paths of length greater or equal to four (respectively five).
\textcolor{black}{In particular,  $\mathcal{CP}_{4}= t_{2,4}s_{2}+t_{3,4}s_{3} + t_{4,4}s_{4}+t_{5,4}s_{5}$ and $\mathcal{CP}_{5}= t_{2,5}s_{2}+t_{3,5}s_{3} + t_{4,5}s_{4}+t_{5,5}s_{5}+t_{6,5}s_{6}$.}

In what follows, we show that linearizability imposes stricter conditions on interaction costs than those  given above.
In particular,  $\mathcal{CP}_{2}$ has to be upper bounded by $\mathcal{CP}_{3}$, and $\mathcal{CP}_{k}$ by $\mathcal{CP}_{k-1}$ for $k \geq 4$
with a constant that is tighter than the one from  Corollary \ref{linear:complete_upper}.

Let us first introduce the {\em path matrix}.
Given a QSPP instance $\mathcal{I} = (G,s,t,c,Q)$, the  $s$-$t$  \emph{path  matrix} $B$ is a matrix whose rows are the characteristic vectors of the $s$-$t$ paths in $G$.
Thus,  the rows and columns of $B$ are indexed by the paths and the arcs of $G$, respectively.
The \emph{cost vector} $b$ is defined as $b_{i} := C(P_{i},c,Q)$.

For example, let $\mathcal{I} = (K_{4}^*,s,t,c,Q)$ be a QSPP instance such that $s=v_1$ and $t=v_4$, see  Figure \ref{linearization}.
The $s$-$t$ path matrix and cost vector are given as follows:
\begin{equation} \label{pathMatr}
B =
\kbordermatrix{ & (v_{1},v_{2}) & (v_{1},v_{3}) & (v_{2},v_{3}) & (v_{3},v_{2}) & (v_{2},v_{4}) & (v_{3},v_{4})\cr
	P_{1} & 1 & 0 & 0 & 0 & 1 & 0 \cr
	P_{2} & 0 & 1 & 0 & 0 & 0 & 1 \cr
	P_{3} & 1 & 0 & 1 & 0 & 0 & 1\cr
	P_{4} & 0 & 1 & 0 & 1 & 1 & 0} \quad \text{ and } \quad b = \begin{bmatrix}
C(P_{1},c,Q)\\ C(P_{2},c,Q) \\ C(P_{3},c,Q) \\ C(P_{4},c,Q)
\end{bmatrix}
\end{equation}

Note that a QSPP instance on $G$ is  linearizable if and only if the linear system  $Bc' = b$, $c' \geq 0$ with  variable $c' \in \R^{m}$ has a solution.
\begin{example} {\em
Let us consider a QSPP instance  $\mathcal{I} = (K_{4}^*,s,t,c,Q)$, see Figure \ref{linearization}.
Let $s=v_1$ and $t=v_4$, and  $q_{(v_{1},v_{2}),(v_{2},v_{4})} = q_{(v_{1},v_{3}),(v_{3},v_{4})}  = 1$, and $q_{e,f} = 0$ for all other pairs of arcs $(e,f)$.
Then the costs satisfy $C(P_{1},c,Q) =  C(P_{2},c,Q) = 2$ and $C(P_{3},c,Q) =  C(P_{4},c,Q) = 0$.
Thus, the sum of the  costs of paths of length two is greater than  the sum of the  costs of paths of length three.
Now, for $y = (-1,-1,1,1)^{\mathrm T}$ we have that  $B^{\mathrm  T}y \geq 0$ and $b^{\mathrm T} y  = -4 < 0$,
and from the Farkas' lemma it follows that the  QSPP instance  $\mathcal{I}$ is not linearizable.}
\end{example}
In the following proposition we derive necessary conditions that should satisfy a linearizable QSPP instance on complete digraph.
 The following conditions require that $\mathcal{CP}_{2}$ is bounded by $\mathcal{CP}_{3}$,
and also $\mathcal{CP}_{k}$  by $\mathcal{CP}_{k-1}$  for $k=4,5$.
Note that those constraints are not imposed in general, see Corollary \ref{linear:complete_upper}.
\begin{proposition} \label{necessary_complete}
\begin{enumerate}
\item[a)] Let  ${\mathcal{I} = (K_{n}^*,s,t,c,Q)}$ be a QSPP instance and  $n \geq 4$.
If $\mathcal{I}$  is linearizable, then
\[
 \mathcal{CP}_{k} \leq \frac{1}{n-k-1} \cdot \mathcal{CP}_{k+1}
\]
for  $k =2,\ldots, n-2$.
\item[b)] Let  ${\mathcal{I} = (K_{n}^*,s,t,c,Q)}$ be a QSPP instance and  $n \geq 5$.
If $\mathcal{I}$  is linearizable, then
\[
 \mathcal{CP}_{k} \leq (n-k) \cdot \frac{k-2}{k-3} \cdot \mathcal{CP}_{k-1}
\]
for every $k =4,\ldots, n-1$.
\end{enumerate}
\end{proposition}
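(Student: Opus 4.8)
The plan is to use the Farkas-lemma characterization of linearizability that was already invoked in the example preceding the statement: the instance is linearizable iff $Bc'=b$ has a solution $c'\ge 0$, which by Farkas' lemma fails precisely when there is a vector $y$ (indexed by the $s$-$t$ paths) with $B^{\mathrm T}y\ge 0$ and $b^{\mathrm T}y<0$. Hence, to prove a necessary condition it suffices to exhibit, for the inequality in question, a single $y$ with $B^{\mathrm T}y\ge 0$; linearizability then forces $b^{\mathrm T}y\ge 0$, and this will be exactly the claimed bound once $y$ is supported on the right length classes. For part a) I would take $y_P=-(n-k-1)$ on every $P\in\mathcal{P}_k$ and $y_P=1$ on every $P\in\mathcal{P}_{k+1}$ (zero elsewhere), so that $b^{\mathrm T}y=\mathcal{CP}_{k+1}-(n-k-1)\mathcal{CP}_k$; for part b) I would take $y_P=(n-k)\frac{k-2}{k-3}$ on $\mathcal{P}_{k-1}$ and $y_P=-1$ on $\mathcal{P}_k$, giving $b^{\mathrm T}y=(n-k)\frac{k-2}{k-3}\mathcal{CP}_{k-1}-\mathcal{CP}_k$. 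In each case the desired inequality is precisely $b^{\mathrm T}y\ge 0$.

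The work then reduces to checking $B^{\mathrm T}y\ge 0$, i.e. that for every arc $e$ the weighted incidence $\sum_{P\ni e}y_P$ is nonnegative. Writing $f_k(e)$ for the number of length-$k$ $s$-$t$ paths through $e$, this amounts to $f_{k+1}(e)\ge (n-k-1)f_k(e)$ for part a) and $f_k(e)\le (n-k)\frac{k-2}{k-3}f_{k-1}(e)$ for part b), uniformly over all arcs $e$. The key computation is to evaluate $f_k(e)$ by arc type in the reduced graph, which has only two relevant kinds of arcs: boundary arcs (those in $H$, leaving $s$ or entering $t$) and internal arcs (between two interior vertices). Counting the interior vertices a path may use and ordering them, I expect $f_k(e)=\binom{n-3}{k-2}(k-2)!$ for a boundary arc and $f_k(e)=\binom{n-4}{k-3}(k-2)!$ for an internal arc; these can be sanity-checked by summing over arcs of a fixed type and comparing with $|\mathcal{P}_k|=\binom{n-2}{k-1}(k-1)!$.

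With these formulas the ratios are immediate: for boundary arcs $f_{k+1}/f_k=n-k-1$ and $f_k/f_{k-1}=n-k$, while for internal arcs $f_{k+1}/f_k=(n-k-1)\frac{k-1}{k-2}$ and $f_k/f_{k-1}=(n-k)\frac{k-2}{k-3}$. In part a) the bound $f_{k+1}(e)\ge (n-k-1)f_k(e)$ then holds with equality on boundary arcs and with slack on internal arcs, since $\frac{k-1}{k-2}\ge 1$ for $k\ge 3$ (and for $k=2$ an internal arc carries $f_2=0$, so the inequality is trivial); in part b) the bound $f_k(e)\le (n-k)\frac{k-2}{k-3}f_{k-1}(e)$ holds with equality on internal arcs and with slack on boundary arcs, since $\frac{k-2}{k-3}\ge 1$ for $k\ge 4$. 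Thus $B^{\mathrm T}y\ge 0$ holds coordinatewise in each case, and Farkas' lemma yields the two inequalities over the stated ranges.

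The routine-but-essential obstacle is the arc counting together with correctly identifying the binding arc type: the constant in each bound is dictated by the worst arc type (boundary arcs in part a), internal arcs in part b)), and one must verify that the per-arc inequality points the same way for every arc simultaneously so that $B^{\mathrm T}y\ge 0$ holds as a vector inequality. The feasibility conditions on the denominators, namely $k\ge 3$ for the factor $\frac{k-1}{k-2}$ and $k\ge 4$ for the factor $\frac{k-2}{k-3}$ to be well defined and at least $1$, are exactly what pin down the ranges $k=2,\dots,n-2$ in part a) and $k=4,\dots,n-1$ in part b), so I would state them explicitly when fixing the admissible values of $k$.
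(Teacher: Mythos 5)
Your proposal is correct and matches the paper's own proof essentially step for step: the paper also takes the dual vector $y$ with exactly your weights, counts paths through an arc by the two types (arcs in $H$ versus internal arcs, with counts $g(k)=\binom{n-3}{k-2}(k-2)!$ and $g'(k)=\binom{n-4}{k-3}(k-2)!$, identical to your $f_k(e)$), and concludes $b^{\mathrm T}y\ge 0$ from linearizability via the Farkas/weak-duality argument. Your explicit identification of the binding arc type in each part and the handling of $k=2$ (where internal arcs carry no length-two paths) is the same verification the paper leaves implicit.
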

\begin{proof}
Let us first show the first claim. Define $g(k) := {n-3 \choose k-2} (k-2)!$ for $k \geq 2$,  and
\[
g'(k) := \begin{cases}
{n-4 \choose k-3} (k-2)! & \text{ for } k \geq 3, \\
0 & \text{ otherwise}.
\end{cases}.
\]

Let $H$ be the arc set defined in $(\ref{linear:complete_average_H})$.
It is not difficult to see that for every arc $e \in H$ (resp.~$e \notin H$), there are $g(k)$ (resp.~$g'(k)$) $s$-$t$ paths of length $k$ containing $e$.
Note that $g(k+1) = (n-k-1) \cdot g(k)$  and $g'(k+1)  \geq (n-k-1) \cdot g'(k)$ for every $k \geq 2$, as $g'(k+1) = (n-k-1) \cdot  \frac{k-1}{k-2} \cdot g'(k)$ for $k \geq 3$.

Take the vector $y$ such that
$$ y_{i} = \begin{cases}
-(n-k-1) & \text{ if } |P_{i}| = k \\
1 & \text{ if } |P_{i}| = k+1 \\
0 & \text{ otherwise}.
\end{cases}$$
Now, for the path matrix $B$ of $K_n^*$ it follows that  $B^{\mathrm T}y \geq 0$, and for the cost vector $b^{\mathrm T}y = -(n-k-1)\mathcal{CP}_{k} +  \mathcal{CP}_{k+1}$.
If $\mathcal{I}$ is linearizable, then $b^{T}y \geq 0$  from where it follows the first claim \textcolor{black}{(by applying Farkas' lemma).}

In a similar fashion, we prove the second claim by taking
\[
 y_{i} = \begin{cases}
(n-k) \cdot \frac{k-2}{k-3} & \text{ if } |P_{i}| = k-1 \\
-1 & \text{ if } |P_{i}| = k \\
0 & \text{ otherwise}.
\end{cases}
\]
\end{proof}

The results from Proposition \ref{necessary_complete} indicate that appropriate restrictions on $\mathcal{CP}_{k}$
might lead to sufficient conditions for a QSPP instance to be linearizable.
Indeed, the next proposition provides  characterization of linearizable QSPP instances on $K_{4}^*$.
\begin{proposition} \label{linear:K4}
Let $\mathcal{I} = (K_{4}^*,s,t,c,Q)$ be a QSPP instance.
$\mathcal{I}$  is linearizable if and only if
$$C(P_{1},c,Q)+C(P_{2},c,Q) \leq C(P_{3},c,Q)+ C(P_{4},c,Q),$$
where
$P_1$, $P_2$ (resp.~$P_3$, $P_4$) are paths of length two (resp.~three).
\end{proposition}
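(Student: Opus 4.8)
The plan is to use the reformulation noted just before the statement: the instance $\mathcal{I}$ is linearizable if and only if the linear system $Bc' = b$, $c'\geq 0$ is feasible, where $B$ is the $s$-$t$ path matrix from \eqref{pathMatr} and $b = (C(P_1,c,Q),\ldots,C(P_4,c,Q))^{\mathrm T}$. I will write $b_i := C(P_i,c,Q)$ and use throughout that each $b_i \geq 0$, which holds because $Q \geq 0$ and $c \geq 0$. For the necessity direction I would simply invoke Proposition \ref{necessary_complete}(a) with $n=4$ and $k=2$: since the only $s$-$t$ paths of length two are $P_1,P_2$ and the only ones of length three are $P_3,P_4$, we have $\mathcal{CP}_2 = b_1+b_2$ and $\mathcal{CP}_3 = b_3+b_4$, and the proposition gives $\mathcal{CP}_2 \leq \frac{1}{n-k-1}\mathcal{CP}_3 = \mathcal{CP}_3$, which is exactly the claimed inequality. (Equivalently, the Farkas certificate $y=(-1,-1,1,1)^{\mathrm T}$ from the preceding example delivers the same conclusion.)

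The substance is the sufficiency direction, which I would establish by explicitly constructing a nonnegative solution of $Bc'=b$. Labelling the six arc costs $\alpha = c'_{(v_1,v_2)}$, $\beta = c'_{(v_1,v_3)}$, $\gamma = c'_{(v_2,v_3)}$, $\delta = c'_{(v_3,v_2)}$, $\varepsilon = c'_{(v_2,v_4)}$, $\zeta = c'_{(v_3,v_4)}$ and reading the rows of $B$ in \eqref{pathMatr}, the system becomes $\alpha+\varepsilon = b_1$, $\beta+\zeta = b_2$, $\alpha+\gamma+\zeta = b_3$, $\beta+\delta+\varepsilon = b_4$. Taking $\alpha,\beta$ as free parameters and solving for the rest yields $\varepsilon = b_1-\alpha$, $\zeta = b_2-\beta$, $\gamma = b_3-b_2-\alpha+\beta$, and $\delta = b_4-b_1+\alpha-\beta$. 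Writing $d := \alpha-\beta$, the six nonnegativity requirements collapse to $0\leq\alpha\leq b_1$, $0\leq\beta\leq b_2$, and $b_1-b_4 \leq d \leq b_3-b_2$.

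The key step is to verify that these constraints are jointly satisfiable precisely under the hypothesis. The range of $d = \alpha-\beta$ achievable over the boxes $\alpha\in[0,b_1]$, $\beta\in[0,b_2]$ is the interval $[-b_2,b_1]$, so it suffices to show that $[b_1-b_4,\,b_3-b_2]\cap[-b_2,\,b_1]$ is nonempty, i.e.\ that $\max(b_1-b_4,-b_2) \leq \min(b_3-b_2,b_1)$. This splits into four inequalities: $b_1-b_4 \leq b_3-b_2$ is exactly the hypothesis $b_1+b_2\leq b_3+b_4$, while $b_1-b_4\leq b_1$, $-b_2\leq b_3-b_2$, and $-b_2\leq b_1$ follow respectively from $b_4\geq 0$, $b_3\geq 0$, and $b_1+b_2\geq 0$. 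Choosing any $d$ in this nonempty intersection and then setting $(\alpha,\beta)=(d,0)$ when $d\geq 0$ and $(\alpha,\beta)=(0,-d)$ when $d<0$ produces a fully nonnegative $c'$, which completes the construction.

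I expect this final feasibility check to be the only delicate point: it is where the single hypothesis inequality must be reconciled with the simultaneous nonnegativity of all six arc costs, and the nonnegativity of the path costs $b_i$ is what makes the remaining three inequalities automatic. Everything else is routine bookkeeping on the fixed $4\times 6$ system, so I would keep the write-up centred on the interval-intersection argument and merely record the back-substituted values of $\gamma,\delta,\varepsilon,\zeta$ to confirm nonnegativity.
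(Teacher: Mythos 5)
Your proof is correct, and both directions land where the paper's do: necessity is obtained exactly as in the paper, by specializing Proposition \ref{necessary_complete}(a) to $n=4$, $k=2$ (the paper writes ``the converse follows from Proposition \ref{necessary_complete}''). For sufficiency the underlying idea is also the same --- exhibit an explicit nonnegative solution of $Bc'=b$ for the $4\times 6$ system \eqref{pathMatr} --- but the execution differs in a way worth noting. The paper splits into cases on the signs of $C(P_1,c,Q)-C(P_3,c,Q)$ and $C(P_2,c,Q)-C(P_4,c,Q)$, writes down a concrete $c'$ in Case 1 and Case 2, and dismisses the rest with ``the remaining cases can be similarly obtained.'' You instead parametrize the full solution set by $(\alpha,\beta)$, observe that all six nonnegativity constraints collapse to $0\le\alpha\le b_1$, $0\le\beta\le b_2$ and $b_1-b_4\le \alpha-\beta\le b_3-b_2$, and reduce feasibility to the nonemptiness of $[b_1-b_4,\,b_3-b_2]\cap[-b_2,\,b_1]$. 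This buys a uniform argument with no case analysis, it actually completes the cases the paper leaves to the reader (your choices $d=b_1-b_2$ and $d=b_3-b_2$ recover the paper's Case 1 and Case 2 respectively), and it isolates precisely why the single hypothesis inequality $b_1+b_2\le b_3+b_4$ is the only non-automatic condition, the other three endpoint comparisons following from $b_i\ge 0$. The computations (back-substitution for $\gamma,\delta,\varepsilon,\zeta$, the range of $\alpha-\beta$ over the box, and the four endpoint inequalities) all check out against the path matrix in \eqref{pathMatr}.
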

\begin{proof}
Let us denote by  $\{v_{1},v_{2},v_{3},v_{4}\}$ vertices of $K_{4}^*$, and set  $s=v_1$ and $t=v_4$, see Figure \ref{linearization}.
We denote four paths as follows, $P_{1} = (v_{1},v_{2},v_{4})$, $P_{2} = (v_{1},v_{3},v_{4})$, $P_{3}= (v_{1},v_{2},v_{3},v_{4}),$ and $P_{4} = (v_{1},v_{3},v_{2},v_{4})$.

Assume $C(P_{1},c,Q)+C(P_{2},c,Q) \leq C(P_{3},c,Q)+ C(P_{4},c,Q)$.
We construct a vector $c'$ s.t.~$C(P_i,c,Q)=C(P_i,c')$ for every $i=1,\ldots,4$.

Case 1. If $C(P_{1},c,Q) \leq C(P_{3},c,Q)$ and $C(P_{2},c,Q) \leq C(P_{4},c,Q)$, then we set
$$c'_{e} = \begin{cases}
C(P_{1},c,Q) & \text{ if } e = (v_{1},v_{2}), \\
C(P_{2},c,Q) & \text{ if } e = (v_{1},v_{3}), \\
C(P_{3},c,Q) - C(P_{1},c,Q) & \text{ if } e = (v_{2},v_{3}), \\
C(P_{4},c,Q) - C(P_{2},c,Q) & \text{ if } e = (v_{3},v_{2}),\\
0 & \text{ otherwise.}
\end{cases} $$

Case 2. If $C(P_{1},c,Q) > C(P_{3},c,Q)$, then we set
$$c'_{e} = \begin{cases}
C(P_{3},c,Q)  & \text{ if } e = (v_{1},v_{2}), \\
C(P_{2},c,Q) & \text{ if } e = (v_{1},v_{3}), \\
C(P_{1},c,Q) - C(P_{3},c,Q) & \text{ if } e = (v_{2},v_{4}), \\
C(P_{4},c,Q) + C(P_{3},c,Q) - C(P_{1},c,Q) -  C(P_{2},c,Q) & \text{ if } e = (v_{3},v_{2}), \\
0 & \text{ otherwise.}
\end{cases}$$
The remaining cases can be similarly obtained.
In all these cases, we have $c' \geq 0$ and $C(P_{i},c,Q) = C(P_{i},c')$ for every $i = 1,\ldots,4$. Thus, $\mathcal{I}$  is linearizable.

\textcolor{black}{The converse} follows from  Proposition \ref{necessary_complete}.
\end{proof}
Note that  one can easily verify the inequality from Proposition \ref{linear:K4}.
The following example  shows that conditions from  Proposition \ref{necessary_complete} are not sufficient already for $K_5^*$.
\begin{example}
The inequalities in Proposition \ref{necessary_complete} are not sufficient for a QSPP instance on $K_5^*$ to be linearizable.
Take $\mathcal{I} = (K_{5}^*,s,t,c,Q)$,
with $s=v_1$, $t=v_5$,  $q_{(v_{3},v_{4}),(v_{4},v_{5})}= 1$, and $q_{e,f} = 0$ for all other pairs of arcs $(e,f)$.
The costs of the paths $P = (v_{1},v_{3},v_{4},v_{5})$ and $P' = (v_{1},v_{2},v_{3},v_{4},v_{5})$ are two, and all the other paths have zero costs.
Thus we have $\mathcal{CP}_{4} =\mathcal{CP}_{3}  = 2$ and $\mathcal{CP}_{2}  = 0$.
It is readily to check that the both inequalities in Proposition \ref{necessary_complete}
are satisfied. However this instance is not linearizable.
Namely,  the paths $( v_{1},v_{3},v_{4},v_{2},v_{5} )$ and $( v_{1},v_{4},v_{5} )$ have zero costs.
If there would exists a corresponding instance of the SPP with the cost vector $c'$,
then the  linear costs of the arcs $(v_{1},v_{3}), (v_{3},v_{4}),(v_{4},v_{5})$ should  be zeros.
This leads to $0 = C(P,c')\neq C(P,c,Q) = 2$, which is not possible.
\end{example}

Let us assume now that a complete directed graph under consideration is a tournament,
which is  a graph in which every pair of vertices is connected by a single uniquely directed edge.
Then, all instances of a tournament with four vertices are linearizable.
\begin{proposition} \label{linear:T4}
Let $\mathcal{I} = (T_{4}^*,s,t,c,Q)$ be a QSPP instance on a tournament $T_4^*$.
Then, $\mathcal{I}$  is linearizable.
\end{proposition}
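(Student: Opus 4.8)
The plan is to prove linearizability by an explicit construction of the linearization vector $c'$, exploiting the fact that a tournament assigns a single orientation to the edge between the two intermediate vertices. By Lemma \ref{linear:czero} I may assume that $c$ is the all-zero vector, and I may ignore every arc entering $s$ or leaving $t$, since no such arc can lie on an $s$-$t$ path. Label the four vertices $s,t,a,b$. First I would enumerate the $s$-$t$ paths: the direct path $s\to t$ (if $(s,t)\in A$), the two length-two paths $s\to a\to t$ and $s\to b\to t$, and the two length-three paths $s\to a\to b\to t$ and $s\to b\to a\to t$. The decisive observation is that, because the edge between $a$ and $b$ carries a single orientation in a tournament, at most one of the two length-three paths can occur.

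The key step (a small lemma) is that \emph{every $s$-$t$ path of $T_4^*$ contains a private arc}, i.e.\ an arc used by no other $s$-$t$ path. The direct path owns the arc $(s,t)$; the unique length-three path, if present, owns the middle arc between $a$ and $b$, which appears on no shorter path; and each length-two path still has a private arc, because the two length-two paths are arc-disjoint from each other and the single length-three path can overlap at most one of a length-two path's two arcs, leaving the other private. I would verify this arc-by-arc, splitting on the orientation of the edge $\{a,b\}$; the argument is symmetric under swapping $a$ and $b$, so essentially one case is needed.

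Granting the private-arc property, the construction is immediate: for each $s$-$t$ path $P_i$ choose a private arc $e_i$, set $c'_{e_i} := C(P_i,c,Q)$, and set $c'_e := 0$ on every arc that is not selected. Distinct paths select distinct private arcs, so there is no conflict, and since $Q\geq 0$ and $c=0$ every path cost is nonnegative, giving $c'\geq 0$. For a fixed path $P_i$, every arc of $P_i$ other than $e_i$ fails to be the private arc of any path and is therefore assigned zero, whence $C(P_i,c') = c'_{e_i} = C(P_i,c,Q)$. Thus $\mathcal{I}$ is linearizable.

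The main obstacle to watch is the case analysis establishing the private-arc property: one must check that when the length-three path shares an arc with a length-two path, the remaining arc of that length-two path is genuinely private. This is exactly the structural feature that fails for $K_4^*$, where both length-three paths coexist and can jointly occupy the length-two arcs, which is precisely what forces the extra inequality in Proposition \ref{linear:K4}. Verifying that a single orientation of $\{a,b\}$ removes this obstruction is the crux of the argument.
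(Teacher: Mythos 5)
Your proof is correct, and it takes a genuinely different route from the paper's. The paper proves this proposition by declaring it ``similar to the proof of Proposition \ref{linear:K4}'', i.e.\ by an explicit construction of $c'$ that branches on which cost comparisons hold among the paths (Case 1, Case 2, \dots), with nonnegativity of $c'$ ultimately resting on an inequality between path costs. Your private-arc argument sidesteps all case analysis on the cost values: once you observe that the single orientation of the edge $\{a,b\}$ eliminates one of the two length-three paths, every surviving $s$-$t$ path owns an arc used by no other path, and the linearization is the uniform assignment $c'_{e_i}=C(P_i,c,Q)$ on private arcs and $0$ elsewhere, which is automatically nonnegative. This is cleaner and, more importantly, it isolates the structural reason the tournament case needs no side condition: in the simplified $K_4^*$ both length-three paths coexist, each length-two path has both of its arcs covered by some length-three path, the private-arc property fails for the length-two paths, and that failure is exactly what forces the inequality $C(P_1)+C(P_2)\le C(P_3)+C(P_4)$ in Proposition \ref{linear:K4}. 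Your verification is complete as stated (the degenerate subcases where some of the five candidate paths are absent only make privacy easier to maintain, and the $a\leftrightarrow b$ symmetry covers the other orientation of the middle edge), so nothing is missing; what each approach buys is that the paper's generalizes to $K_4^*$ at the price of a cost inequality, while yours explains for free why no such inequality is needed on $T_4^*$.
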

\begin{proof} The proof is similar to the proof of \textcolor{black}{Proposition \ref{linear:K4}}.
\end{proof}
There exist QSPP instances on a tournament with five vertices that are not linearizable.

\section{The QSPP on directed grid graphs} \label{sect:DGG}

Directed grid graphs are introduced in Section \ref{sec:costMatr}.
The directed grid graph $G_{pq}$ ($p,q\geq 2$) has $pq$ vertices and $2pq-p-q$ arcs, given as in \eqref{GppAv}.
Every $s$-$t$ path in  $G_{pq}$ has the same length.
In Section \ref{sec:costMatr} we prove that  an optimal solution of  the QSPP on  $G_{p,q}$,
whose quadratic cost matrix is a symmetric weak sum matrix or a nonnegative symmetric product matrix can be obtained by solving the corresponding SPP.

In this section we present an algorithm that verifies whether a QSPP instance on a directed
grid graph is linearizable, and if it is linearizable the algorithm returns the corresponding  linearization vector.
The complexity of our algorithm is ${\mathcal{O}(p^{3}q^{2}+p^{2}q^{3})}$.
Punnen and Kabadi \cite{punnen2017qap} present an   $\mathcal{O}(n^2)$ algorithm for the  Koopmans-Beckmann QAP linearization problem, where $n$ is the size of the QAP.

In this section we assume that  $s = v_{1,1}$ and $t = v_{p,q}$, unless otherwise specified.
The number of $s$-$t$ paths in $G_{p,q}$ {\color{black}is given as follows.}
\begin{lemma}
The number of $s$-$t$ paths in the directed grid graph $G_{p,q}$ is  ${p+q-2 \choose p -1}$.
\end{lemma}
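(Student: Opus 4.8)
The plan is to count the number of $s$-$t$ paths in $G_{p,q}$ by setting up a bijection between such paths and lattice monotone paths, which are in turn enumerated by a standard binomial coefficient. First I would recall the structure of the directed grid graph: from the definition in \eqref{GppAv}, every arc either increases the first coordinate by one (a ``down'' move, using arcs $(v_{i,j}, v_{i+1,j})$) or increases the second coordinate by one (a ``right'' move, using arcs $(v_{i,j}, v_{i,j+1})$). All arcs are directed so that coordinates can only increase, which means every walk in $G_{p,q}$ is automatically a path (no vertex can be revisited), and moreover every $s$-$t$ path has length exactly $p+q-2$, as already noted in Example \ref{def:gridGraph}.

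The key step is the following counting argument. Any $s$-$t$ path $P = (v_{1,1}, \ldots, v_{p,q})$ is a sequence of $p+q-2$ moves, each of which is either a down move or a right move. Since $s = v_{1,1}$ and $t = v_{p,q}$, to reach $t$ from $s$ the path must make exactly $p-1$ down moves (to increase the first coordinate from $1$ to $p$) and exactly $q-1$ right moves (to increase the second coordinate from $1$ to $q$). Conversely, I would verify that any arrangement of $p-1$ down moves and $q-1$ right moves yields a valid $s$-$t$ path: starting at $v_{1,1}$, each prefix of such a sequence stays within the grid $\{1,\ldots,p\} \times \{1,\ldots,q\}$ (because the coordinates are monotone and the total number of each move type is exactly what is needed), so every consecutive pair of vertices is joined by an arc in $A$, and the endpoint is $v_{p,q}$. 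This establishes a bijection between $s$-$t$ paths and binary sequences of length $p+q-2$ with exactly $p-1$ entries marked as down moves.

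The number of such sequences is the number of ways to choose which $p-1$ of the $p+q-2$ positions are down moves, namely $\binom{p+q-2}{p-1}$, which gives the claimed count. I do not expect any serious obstacle here; the only point requiring a small amount of care is the verification that every monotone move-sequence corresponds to a genuine path (i.e.\ stays inside the grid and uses only existing arcs), but this follows immediately from the monotonicity of the coordinates and the exact move counts. One could alternatively prove the lemma by induction on $p+q$, observing that the number $N(p,q)$ of $s$-$t$ paths satisfies the recursion $N(p,q) = N(p-1,q) + N(p,q-1)$ (conditioning on whether the last arc into $t$ is a down or right move), with boundary conditions $N(1,q) = N(p,1) = 1$, and then checking that $\binom{p+q-2}{p-1}$ satisfies the same recursion and boundary values via Pascal's rule; I would present the direct bijective argument as the primary proof since it is cleaner and more transparent.
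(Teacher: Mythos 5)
Your argument is correct and complete: the bijection between $s$-$t$ paths and sequences of $p-1$ ``down'' moves and $q-1$ ``right'' moves is the standard lattice-path count, and your verification that every such move sequence stays in the grid is the only point needing care. The paper states this lemma without any proof (treating it as a well-known fact), so there is nothing to compare against; your write-up supplies exactly the missing argument.
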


Let us consider for now only linear costs associated with arcs in $G_{p,q}$.
We say that the linear cost vectors $c$ and $d$ are \emph{equivalent} if $C(P,c) = C(P,d)$ for every $s$-$t$ path in the graph.
Given a linear cost vector $c$ associated with arcs  in $G_{p,q}$ and a vertex $v_{i,j} \notin \{s,t\}$,
we describe how to construct a new linear cost vector $d$, see $\eqref{linear:procedure1}$,
that is equivalent to $c$ and whose associated cost of an outgoing arc from $v_{i,j}$ equals zero.
In particular, let $H  \neq \emptyset$ be the set of incoming arcs to $v_{i,j}$, and $F$ the set of outgoing arcs from $v_{i,j}$.
Let arc $f \in F$  be an outgoing arc from $v_{i,j}$ defined as follows:
\begin{equation}  \label{linear:outgoingArcf}
f = \begin{cases}
(v_{i,j},v_{i,j+1})   & \text{if } j \leq q-1 \\
(v_{i,j},v_{i+1,j})   & \text{if } j = q \\
\end{cases}.
\end{equation}
The new linear cost vector $d$ is given by
\begin{equation}  \label{linear:procedure1}
d_{e}=  \begin{cases}
0 & \text{if } e = f \\
c_{e}  + c_{f} & \text{if } e \in H\\
c_{e}  - c_{f} & \text{if } e \in F \backslash \{f\}\\
c_{e} & \text{otherwise}
\end{cases}.
\end{equation}
In the other words, we redistributed weights of the arcs such that the outgoing arc from $v_{i,j}$ that is  of the form \eqref{linear:outgoingArcf} has zero weight.
One can easily verify that $c$ is equivalent to $d$.
Note that  the shortest path problem on a directed acyclic graph with negative weights remains polynomial-time solvable.

The \emph{depth} of a vertex $v_{i,j}$  ($i=1,\ldots, p,$ $j=1,\ldots, q$) is defined as $i+j$.
If we apply the above procedure to a linear cost vector $c$ repeatedly for each node $v_{i,j} \notin \{s,t\}$
starting with the node whose depth is $p+q-1$ until the node with depth three.
(The order of applying the procedure for the nodes with the same depth is arbitrary.)
Then we obtain a linear cost vector, denoted by $\widehat{c}$,
such that $\widehat{c}_{f} = 0$ for all $f$ given in \eqref{linear:outgoingArcf}.
We say that $\widehat{c}$ is the {\em \textcolor{black}{reduced form}} of $c$, or  $\widehat{c}$  is a linear cost vector in  reduced form.
As an example, the constructed linear cost vector in Lemma \ref{linear:LemmaG2q} is in reduced form.
\textcolor{black}{Note that the reduced form depends on the choice of the outgoing arc $f$.
In what follows, we assume that $f$ is  specified as in  \eqref{linear:outgoingArcf}.}

\begin{lemma}  \label{linear:reducedtime}
If $c$ is a linear cost vector on $G_{p,q}$, then its reduced form $\widehat{c}$ can be computed in $\mathcal{O}(p q)$.
\end{lemma}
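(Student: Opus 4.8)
The plan is to show that the reduced form $\widehat{c}$ is obtained by a single sweep through the vertices of $G_{p,q}$, applying the local redistribution rule \eqref{linear:procedure1} exactly once per interior vertex, and that each such application costs only constant work. Since the number of interior vertices is at most $pq$, and the total work is proportional to the number of arc-updates performed, the overall running time will be $\mathcal{O}(pq)$.

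First I would bound the cost of a single application of the procedure at a fixed vertex $v_{i,j} \notin \{s,t\}$. By \eqref{linear:procedure1}, the update only modifies the weights of the arcs in $H \cup F$, i.e.~the arcs incident to $v_{i,j}$. In the directed grid graph $G_{p,q}$ each vertex has in-degree at most two (from the left and from below) and out-degree at most two (to the right and upward), so $|H| + |F| \leq 4$. Hence recomputing the new weights $d_e$ for all affected arcs requires a constant number of additions and subtractions. Thus a single application of the procedure runs in $\mathcal{O}(1)$ time.

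Next I would count how many applications are performed. The procedure is applied exactly once to each interior vertex $v_{i,j} \notin \{s,t\}$, processed in order of nonincreasing depth $i+j$ from $p+q-1$ down to $3$. The number of vertices is $|V| = pq$, so the number of applications is at most $pq$. Multiplying the $\mathcal{O}(1)$ cost per application by the $\mathcal{O}(pq)$ applications gives total time $\mathcal{O}(pq)$ for the redistribution phase. I would also note that initializing and storing the working cost vector $c$ indexed by the arc set costs $\mathcal{O}(|A|) = \mathcal{O}(2pq - p - q) = \mathcal{O}(pq)$, which does not change the asymptotic bound.

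The only subtle point — and the step I expect to require the most care — is verifying that processing strictly by decreasing depth guarantees that once a vertex is handled its chosen outgoing arc $f$ is never disturbed by a later application. When we process $v_{i,j}$, the rule modifies incoming arcs to $v_{i,j}$ (whose tails sit at depth $i+j-1$) and outgoing arcs from $v_{i,j}$ (whose heads sit at depth $i+j+1$). A subsequent application at a shallower vertex $v_{i',j'}$ with $i'+j' < i+j$ touches only arcs incident to $v_{i',j'}$; since an outgoing arc of $v_{i,j}$ has its head at depth $i+j+1 > i'+j'$, it can be an \emph{incoming} arc of $v_{i',j'}$ only if its head equals $v_{i',j'}$, which is impossible because that head lies at depth $i+j+1$, strictly greater than $i'+j'$. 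Therefore no later step alters an already-zeroed arc $f$, so a single pass suffices and the claimed $\mathcal{O}(pq)$ bound holds.
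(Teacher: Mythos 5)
Your argument is correct. The paper states this lemma without proof, and your counting argument is exactly the natural one the authors leave implicit: each interior vertex of $G_{p,q}$ has in-degree and out-degree at most two, so one application of \eqref{linear:procedure1} is $\mathcal{O}(1)$, and there are $pq-2$ interior vertices, each processed once in order of decreasing depth. Your closing observation that later (shallower) applications cannot disturb an already-zeroed arc $f$ is a welcome correctness check that the paper also omits; the only case you pass over silently, namely that $f$ could be an \emph{outgoing} arc of a later vertex $v_{i',j'}$, is immediate since the tail of $f$ is $v_{i,j}\neq v_{i',j'}$.
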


The following result shows that there exists \textcolor{black}{a unique} linear cost vector in reduced form.
\begin{lemma} \label{linear:LemmaUniqueReducedF}
If the linear cost vectors $c$ and $d$ on  $G_{p,q}$ are equivalent,
then their reduced forms $\widehat{c}$ and $\widehat{d}$ are equal, i.e., $\widehat{c} = \widehat{d}$.
\end{lemma}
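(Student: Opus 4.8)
The plan is to reduce the statement to a uniqueness property of the space of ``null'' cost vectors and then verify that property with two explicit families of paths. Since the one-vertex step \eqref{linear:procedure1} always produces an equivalent cost vector, the full reduction gives $\widehat{c}$ equivalent to $c$ and $\widehat{d}$ equivalent to $d$; combined with the hypothesis that $c$ and $d$ are equivalent and with transitivity of equivalence, we obtain that $\widehat{c}$ and $\widehat{d}$ are equivalent. Set $e := \widehat{c}-\widehat{d}$. By construction $e_f = 0$ on every designated outgoing arc $f$ of the form \eqref{linear:outgoingArcf}, and $C(P,e) = C(P,\widehat{c}) - C(P,\widehat{d}) = 0$ for every $s$-$t$ path $P$ by linearity of the cost in the cost vector. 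Hence it suffices to prove the uniqueness claim: the only linear cost vector that is in reduced form and assigns cost zero to every $s$-$t$ path is the zero vector.

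First I would pin down the arcs on which $e$ may be nonzero. The designated arcs are the rightward arc out of each vertex $v_{i,j}\neq s,t$ with $j\le q-1$, together with the downward arc out of each vertex $v_{i,q}\neq t$. Consequently $e$ vanishes on all rightward arcs except $(v_{1,1},v_{1,2})$ and on all downward arcs in column $q$. Writing $\alpha := e_{(v_{1,1},v_{1,2})}$ and $d_{i,j} := e_{(v_{i,j},v_{i+1,j})}$ for $1\le i\le p-1$, $1\le j\le q-1$, these are the only entries that can be nonzero, and for any $s$-$t$ path one has $C(P,e) = \alpha\cdot \mathbbm{1}[(v_{1,1},v_{1,2})\in P] + \sum_{(v_{i,j},v_{i+1,j})\in P,\; j\le q-1} d_{i,j}$.

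Next I would kill these entries one at a time. Let $P_0$ run right along row $1$ from $s$ to $v_{1,q}$ and then down column $q$ to $t$; every arc of $P_0$ other than $(v_{1,1},v_{1,2})$ is either a non-source rightward arc in row $1$ or a downward arc in column $q$, hence carries zero $e$-cost, so $0 = C(P_0,e) = \alpha$. To show $d_{i,j}=0$ for a fixed pair $i\le p-1$, $j\le q-1$, I would compare two paths that agree outside column $j$: let $A$ go right along row $1$ to $v_{1,j}$, down column $j$ to $v_{i+1,j}$, right to $v_{i+1,q}$, and down to $t$; let $A'$ be identical except that it descends only to $v_{i,j}$ before turning right. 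Both are legal monotone $s$-$t$ paths precisely because $i\le p-1$ and $j\le q-1$. Every arc used by $A$ or $A'$ off column $j$ is a non-source rightward arc or a column-$q$ downward arc, so contributes nothing; thus $C(A,e)=\sum_{k=1}^{i} d_{k,j}$ and $C(A',e)=\sum_{k=1}^{i-1} d_{k,j}$, and subtracting the relations $C(A,e)=C(A',e)=0$ yields $d_{i,j}=0$ (the case $i=1$ being immediate). Letting $i$ range upward for each $j$ gives $d_{i,j}=0$ throughout, whence $e=0$ and therefore $\widehat{c}=\widehat{d}$.

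The only delicate point, requiring care rather than ingenuity, is the bookkeeping: one must confirm that each constructed path is a genuine $s$-$t$ path in $G_{p,q}$ (the admissible ranges $i\le p-1$, $j\le q-1$ coincide exactly with those for which $d_{i,j}$ is defined) and that every arc off the targeted downward arc indeed lies among the designated arcs on which $e$ vanishes. I do not expect a substantive obstacle beyond this verification.
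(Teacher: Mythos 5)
Your proof is correct and follows essentially the same route as the paper's: both identify the support set of a reduced-form vector (the downward arcs $(v_{i,j},v_{i+1,j})$ with $j\le q-1$ together with $(v_{1,1},v_{1,2})$) and then use explicitly constructed monotone $s$-$t$ paths whose costs pin down those entries, which is exactly the paper's ``critical paths'' argument recast as a null-vector/path-differencing computation. The only slip is the claim that every arc of $A$ or $A'$ off column $j$ carries zero $e$-cost --- for $j\ge 2$ these paths do use the source arc $(v_{1,1},v_{1,2})$ --- but this is harmless since that arc is common to $A$ and $A'$ and you have already shown $\alpha=0$.
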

\begin{proof}
For any linear cost vector $c$, its reduced form $\widehat{c}$ satisfies  that $\widehat{c}_{e} = 0$ whenever $e$ does not belong to the set of arcs
$$J = {\{ (v_{i,j},v_{i+1,j}) \;|\; i =1,\ldots,p-1, \; j = 1,\ldots,q-1\} \cup \{(v_{1,1},v_{1,2})\}}.$$
Note that $|J| = (p-1)(q-1) + 1$.
To every arc $e$ from $J$ we assign the path in the following way:
\begin{align*}
P_{e} = \begin{cases}
(v_{1,1},\ldots,v_{1,q},\ldots,v_{p,q}) \!\!\!\!\!\!\!& \text{ if } e = (v_{1,1},v_{1,2}), \\
(v_{1,1},\ldots,v_{1,j},v_{2,j},\ldots,v_{2,q},\ldots,v_{p,q}) \!\!\!\!\!\!\!& \text{ if } e = (v_{1,j},v_{2,j}), j=1,\ldots,q-1  \\
(v_{1,1},\ldots,v_{i,1},\ldots,v_{i,j},v_{i+1,j},\ldots,v_{i+1,q},\ldots,v_{p,q}) \!\!\!\!\!\!\!&
\text{ if } e = (v_{i,j},v_{i+1,j}), i \geq 2\\
&~~~~ j=1,\ldots,q-1.
\end{cases}
\end{align*}
We call those $(p-1)(q-1) + 1$ paths the \emph{critical paths}.
It is not difficult to see that the cost of the critical path $P_e$ for $e \in J$  i.e., $C(P_{e},c)$  uniquely determines the value of
$\widehat{c}_{e}$ for $e \in J$. \textcolor{black}{Moreover}, the reduced form $\widehat{c}$ has $\widehat{c}_{e} = 0$ for $e \notin J$.

Now if $d$ is a linear cost vector that is equivalent to $c$, then $C(P,d) = C(P,c)$ for every $s$-$t$ path $P$ in the graph.
In particular, this equality holds for the critical paths.
This implies that $\widehat{c_{e}} = \widehat{d_{e}}$ for every arc $e \in J$.
Since $\widehat{c_{e}} = \widehat{d_{e}} = 0$ for $e \notin J$, we have $\widehat{c} = \widehat{d}$.
\end{proof}

\medskip
If an instance $\mathcal{I}= (G_{p,q},s,t,c,Q)$  of the QSPP  is linearizable, then all linearizations of $\mathcal{I}$ are equivalent to each other.
Suppose that the vector $c'$ is a linearization vector of $\mathcal{I}$, then the proof of Lemma  \ref{linear:LemmaUniqueReducedF}
gives a recipe to calculate  $\widehat{c'}$, which is the unique linearization vector in reduced form. This recipe uses only the costs of the critical paths to determine $\widehat{c'}$.
Indeed,  the cost of the critical path $P_e$ ($e \in J$) satisfies $C(P_{e},c') = C(P_{e},c,Q)$, where  $c'$  is the linearization vector of $\mathcal{I}$.
The costs $C(P_{e},c,Q)$ for $e \in J$ can be easily obtained from the input instance.

In fact, the above calculation of the unique linear cost vector in reduced form can be implemented
 even \textcolor{black}{if  the linearizability} of $\mathcal{I}$ is not known.
We call the resulting vector the \emph{pseudo-linearization} vector of $\mathcal{I}$, denoted by $\widehat{pc}$.
It is not hard to verify that $\mathcal{I}$ is linearizable if and only if the pseudo-linearization vector $\widehat{pc}$ is a linearization vector of $\mathcal{I}$.

Let us assume from now on that the linear cost vector equals the all-zero vector, i.e., $c=0$.

\begin{lemma} \label{linear:pseudotime}
Let $\mathcal{I}= (G_{p,q},s,t,c,Q)$  be an instance of the QSPP.
The pseudo-linearization vector $\widehat{pc}$ for $\mathcal{I}$ can be computed in $\mathcal{O}(p^{2}q+pq^{2})$ time.
\end{lemma}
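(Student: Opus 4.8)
The plan is to bound the running time by the cost of computing the cost $C(P_e, c, Q) = C(P_e, 0, Q)$ of each critical path $P_e$ for $e \in J$, since the recipe from the proof of Lemma~\ref{linear:LemmaUniqueReducedF} determines $\widehat{pc}$ directly from these $|J| = (p-1)(q-1)+1$ values. There are $\mathcal{O}(pq)$ critical paths, so if each path's quadratic cost could be computed independently in the naive way—summing $q_{ef}$ over all pairs of arcs on the path—each would cost $\mathcal{O}(L^2)$ where $L = p+q-2$ is the common path length, giving a total of $\mathcal{O}(pq \cdot (p+q)^2)$. This is too slow, so the first thing I would do is look for structure in the critical paths that lets the costs be computed more cheaply, either in batch or incrementally.

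The key observation I would exploit is that the critical paths defined in Lemma~\ref{linear:LemmaUniqueReducedF} are highly overlapping: each $P_e$ consists of a monotone ``staircase'' prefix that hugs one row/column, then drops down one level at arc $e$, then runs along a fixed suffix. Consecutive critical paths (e.g.\ $P_{(v_{i,j},v_{i+1,j})}$ as $j$ varies along a fixed row $i$) differ only in a small, localized portion. This suggests computing the quadratic costs incrementally: after computing $C(P_e,0,Q)$ for one critical path, the cost of an adjacent critical path can be updated by adding and subtracting only the interaction terms involving the arcs that change. Since a single step along a row or column swaps out $\mathcal{O}(1)$ arcs, and updating the quadratic cost when one arc is exchanged requires recomputing its interaction with all $\mathcal{O}(p+q)$ arcs on the path, each incremental update costs $\mathcal{O}(p+q)$.

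Carrying this out, I would organize the critical paths row by row (and handle the special arc $(v_{1,1},v_{1,2})$ separately). For a fixed row $i$, I would compute the quadratic cost of the first critical path in that row from scratch in $\mathcal{O}((p+q)^2)$ time, then sweep $j$ across the row, updating the cost in $\mathcal{O}(p+q)$ per step; there are $\mathcal{O}(q)$ columns, giving $\mathcal{O}((p+q)^2 + q(p+q))$ per row and $\mathcal{O}(p(p+q)^2)$ over all $p$ rows, which is $\mathcal{O}(p^3 + p^2 q + pq^2)$. A symmetric sweep organized column by column yields $\mathcal{O}(q^3 + pq^2 + p^2 q)$, and by choosing the cheaper orientation (or noting the stated bound absorbs both), one obtains $\mathcal{O}(p^2 q + pq^2)$. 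Once all critical-path costs are in hand, reading off $\widehat{pc}_e$ for each $e \in J$ via the triangular dependency from the reduced-form recipe costs only $\mathcal{O}(pq)$ additional work, which is dominated.

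The main obstacle I anticipate is verifying the incremental-update claim rigorously: one must confirm that moving from one critical path to the ``next'' genuinely exchanges only $\mathcal{O}(1)$ arcs, and must correctly account for the quadratic interaction terms—both the linear-in-$c$ contribution and the cross terms $q_{ef}$—so that no pairwise cost is double-counted or dropped during the add/subtract update. Establishing a clean ordering of the critical paths in which each successive path is a single ``elementary move'' from its predecessor, together with a precise formula for how $C(\cdot,0,Q)$ changes under such a move, is the crux; the arithmetic of the final complexity bound is then routine bookkeeping.
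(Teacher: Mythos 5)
Your proposal is correct and matches the paper's approach: both compute the critical-path costs incrementally, using that consecutive critical paths exchange only $\mathcal{O}(1)$ arcs so each update costs $\mathcal{O}(p+q)$, and then read off $\widehat{pc}$ from these costs via the triangular dependency. The only difference is that the paper chains \emph{all} critical paths into a single sequence (the last path associated with row $i$ also differs from the first one of row $i+1$ in just two arcs), so only one path is ever computed from scratch; this makes your per-row restarts, and hence the row-versus-column orientation choice, unnecessary --- though note that the orientation choice really is needed in your version, since the parenthetical claim that the stated bound absorbs the $p^{3}$ term is false when $p \gg q$.
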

\begin{proof}
The quadratic cost of the critical path $P_{(v_{1,1},v_{1,2})} = (v_{1,1},\ldots,v_{1,q},\ldots,v_{p,q} )$ is calculated straightforward via the formula
$2 \sum_{e,f \in P}q_{ef}$ which costs $\mathcal{O}(p^{2}+q^2)$.
The critical path $P_{(v_{1,q-1},v_{2,q-1})} = (v_{1,1},\ldots,v_{1,q-1},v_{2,q-1},v_{2,q},\ldots,v_{p,q})$  differs only in two arcs from $P$.
Thus, its cost can be computed in $\mathcal{O}(p+q)$ steps by using the already obtained cost $C(P_{(v_{1,1},v_{1,2})},c,Q)$.
All  other costs can be computed iteratively in the same manner in $\mathcal{O}(p+q)$ steps. Since there are roughly $p \cdot q$ critical paths,
the calculation of $\widehat{pc}$ can be done in $\mathcal{O}(p^{2}q+pq^{2})$.
\end{proof}

The following result relates  linearizable instances on a directed acyclic graph having the same quadratic costs and source vertices, but  different target vertices.
\begin{lemma} \label{linear:gridlemma2}
Let $\mathcal{I} = (G,s,t,c,Q)$ be a  QSPP instance on the directed acyclic graph $G$.
We have that $c'$ is a linearization  vector of $\mathcal{I}$ if and only if the vector $c^{v}$ given by
\begin{equation}\label{linear:gridlemma2_eq}
c^{v}_{e} = \begin{cases}
c_{e}'  - 2 \cdot q_{(v,t),e} & \text{ if } e = (u,w) \text{ and } u \neq s \\
c_{e}' - 2 \cdot q_{(v,t),e} + c'_{(v,t)}  & \text{ if } e = (u,w) \text{ and } u = s
\end{cases},
\end{equation}
is a linearization vector of the instance $\mathcal{I}^{v} = (G,s,v,c,Q)$ for every vertex $v$ such that $(v,t) \in A$.
\end{lemma}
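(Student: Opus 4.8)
The plan is to collapse the entire biconditional onto a single cost identity that pairs each $s$-$v$ path with a corresponding $s$-$t$ path. Write $g := (v,t)$ for the distinguished arc. Since $G$ is acyclic, appending $g$ to any $s$-$v$ path $P$ yields a genuine $s$-$t$ path $P' := P \cup \{g\}$ (acyclicity guarantees $t \notin P$, so no vertex is repeated), and conversely every $s$-$t$ path ends in a unique arc $(v,t)$ with $v \in \delta^{-}(t)$; deleting that arc returns an $s$-$v$ path. Thus, as $v$ ranges over the predecessors of $t$, the $s$-$v$ paths are in bijection with the $s$-$t$ paths grouped by their final arc.

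The computational heart is to evaluate the cost gap across this bijection on both the quadratic and the linear side. Using that $Q$ is symmetric with zero diagonal and that $c = 0$ by the standing assumption, expanding $C(P',c,Q)$ and isolating the terms that involve $g$ gives
\[
C(P',c,Q) - C(P,c,Q) = 2\sum_{e \in P} q_{g,e}.
\]
On the linear side, since every $s$-$v$ path $P$ contains exactly one arc leaving $s$ (its first arc), summing the two-case defining formula for $c^{v}$ over $P$ yields $C(P,c^{v}) = \sum_{e \in P}(c'_e - 2q_{g,e}) + c'_{g}$, whereas $C(P',c') = \sum_{e \in P} c'_e + c'_{g}$. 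Subtracting shows $C(P',c') - C(P,c^{v}) = 2\sum_{e \in P} q_{g,e}$ as well, so the two gaps coincide and we obtain the key identity
\[
C(P',c') - C(P',c,Q) = C(P,c^{v}) - C(P,c,Q),
\]
valid for every $s$-$v$ path $P$ and its extension $P' = P \cup \{g\}$.

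Both directions follow at once. For the forward direction, if $c'$ linearizes $\mathcal{I}$ then the left-hand side vanishes for every $s$-$t$ path, in particular for those ending in $g$; hence the right-hand side vanishes for every $s$-$v$ path, i.e.\ $c^{v}$ linearizes $\mathcal{I}^{v}$, and since $v$ was an arbitrary predecessor of $t$ this holds for all relevant $v$. For the converse, assume $c^{v}$ linearizes $\mathcal{I}^{v}$ for every $v$ with $(v,t) \in A$; given any $s$-$t$ path $P'$, let $g = (v,t)$ be its last arc and $P = P' \setminus \{g\}$ the associated $s$-$v$ path, so the right-hand side of the identity is zero and therefore the left-hand side is zero, i.e.\ $C(P',c') = C(P',c,Q)$. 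As every $s$-$t$ path arises this way, $c'$ linearizes $\mathcal{I}$.

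The step I would be most careful about is the role of acyclicity in the forward direction: one must know that extending an $s$-$v$ path by $g$ never revisits a vertex, which can fail in a general digraph (a path could reach $v$ through $t$) but is guaranteed here, since an $s$-$t$ subpath composed with the arc $(v,t)$ would close a directed cycle. I would also flag the bookkeeping of the $c'_{g}$ term, which appears exactly once per path and is tied to the unique arc out of $s$; this is precisely what the two-case definition of $c^{v}$ is engineered to produce, and the standing assumption $c = 0$ is what ensures no extra $c_{g}$ term survives the expansion.
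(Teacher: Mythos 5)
Your proof is correct and follows essentially the same route as the paper: peel off the last arc $(v,t)$ of an $s$-$t$ path and match the term $2\sum_{e\in P} q_{(v,t),e}$ arising on the quadratic side against the one built into the definition of $c^{v}$ on the linear side, using $c=0$, $q_{(v,t),(v,t)}=0$, and the fact that each path uses exactly one arc out of $s$. Packaging this as a single gap identity that delivers both directions at once is a slightly cleaner presentation than the paper's one-directional chain of equalities followed by ``the converse follows in a similar way,'' but the underlying computation is identical.
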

\begin{proof}
Let us assume that there is a vector $c'$ such that the vector $c^{v}$ defined in \eqref{linear:gridlemma2_eq} is a linearization vector
of $\mathcal{I}^{v}$ for \textcolor{black}{every vertex} $v$ such that $(v,t) \in A$.
Let $P = (e_{1},e_{2},\ldots,e_{k})$ be a $s$-$t$ path, where $e_{k} = (v,t)$.
Then $P^{v} = (e_{1},e_{2},\ldots,e_{k-1})$ is a $s$-$v$ path.
We have that
\begin{align*}
C(P,c') &=   c_{e_{1}}' + \sum_{i=2}^{k-1} c_{e_{i}}' + c_{e_{k}}' \\
& = c^{v}_{e_{1}} - c_{e_{k}}' +  2 \cdot q_{e_{k},e_{1}} + \sum_{i=2}^{k-1} (c^{v}_{e_{i}} + 2 \cdot q_{e_{k},e_{i}})  + c^{v}_{e_{k}}+ 2 \cdot q_{e_{k},e_{k}} \\
& = \sum_{i=1}^{k-1} c^{v}_{e_{i}} + 2 \cdot \sum_{i=1}^{k-1}  q_{e_{k},e_{i}}  + (-c_{e_{k}}'  + c^{v}_{e_{k}}+ 2 \cdot q_{e_{k},e_{k}}) \\
& = C(P^{v},c,Q)  + 2 \cdot \sum_{i=1}^{k-1} q_{e_{k},e_{i}}= C(P,c,Q).
\end{align*}
\textcolor{black}{Recall that  the linear cost vector $c$ is assumed to be zero.}
The fourth equation exploits  that $q_{e_{k},e_{k}} = 0, c^{v}_{e_{k}} = c_{e_{k}}'$.
This shows that $c'$ is a linearization vector of $\mathcal{I}$.

\textcolor{black}{The converse follows} in a similar way.
\end{proof}
Note that the above lemma is proven for any directed acyclic graph. Therefore, it is also valid for the directed grid graphs.
For the directed grid graphs,
we simplify notation and write $\mathcal{I}^{i,j}$ for the instance $\mathcal{I}^{v_{i,j}}$, and $c^{i,j}$
for the associated linear cost vector $c^{v_{i,j}}$. We also use $\mathcal{I}^{p,q} = \mathcal{I}$ and $c^{p,q} = c'$.
In what follows, we exploit the previous lemma to derive our linearization algorithm.

We first prove  that  any instance of the QSPP on  $G_{2,q}$  ($q\geq 2$) is linearizable.
\begin{lemma} \label{linear:LemmaG2q}
Let   $\mathcal{I} = (G_{2,q},s,t,c,Q)$ be a QSPP instance on a directed grid graph $G_{2,q}$ and $q \geq 2$.
Then  $\mathcal{I}$ is linearizable.
\end{lemma}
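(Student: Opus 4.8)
The plan is to prove that every QSPP instance on the directed grid graph $G_{2,q}$ is linearizable by explicitly constructing a linearization vector $c'$. The key structural fact is that $G_{2,q}$ has only two rows, so an $s$-$t$ path is completely determined by the single index $j \in \{1,\ldots,q\}$ at which it ``drops down'' from row $1$ to row $2$ via the vertical arc $(v_{1,j},v_{2,j})$. Thus there are exactly $q$ distinct $s$-$t$ paths, call them $P_1,\ldots,P_q$, where $P_j$ traverses the horizontal arcs $(v_{1,1},v_{1,2}),\ldots,(v_{1,j-1},v_{1,j})$ along the top, then the vertical arc $(v_{1,j},v_{2,j})$, then the horizontal arcs $(v_{2,j},v_{2,j+1}),\ldots,(v_{2,q-1},v_{2,q})$ along the bottom. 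First I would record this enumeration and compute each quadratic cost $b_j := C(P_j,c,Q)$; these $q$ numbers are the only data the linearization must reproduce.

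Next I would set up the linearization vector. Since $c=0$ may be assumed and we only need $q$ equations (one per path) satisfied with a nonnegative vector, the natural move is to concentrate the cost on the $q$ vertical arcs $(v_{1,j},v_{2,j})$, each of which lies on exactly one path. The idea is to try $c'_{(v_{1,j},v_{2,j})} := b_j$ for each $j$ and $c'_e := 0$ on all horizontal arcs; then by construction $C(P_j,c') = b_j = C(P_j,c,Q)$ for every $j$, so $c'$ is a valid linearization. The only thing that could obstruct this is the requirement $c' \geq 0$ in the definition of linearizability: one must verify $b_j \geq 0$ for all $j$. Since the linear cost $c$ is zero and $Q$ is assumed nonnegative with zero diagonal, each path cost $b_j = \sum_{e,f\in P_j} q_{ef} \geq 0$, so nonnegativity holds automatically.

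The main obstacle to watch for is whether the problem intends the linearization vector to be produced in the specific \emph{reduced form} used by the grid-graph algorithm, rather than merely \emph{some} nonnegative vector; the remark preceding the lemma that ``the constructed linear cost vector in Lemma \ref{linear:LemmaG2q} is in reduced form'' signals that the authors want the witness to satisfy $\widehat{c'}_f = 0$ for the outgoing arcs $f$ of the form \eqref{linear:outgoingArcf}. Placing all weight on the vertical arcs $(v_{1,j},v_{2,j})$ for $j\leq q-1$ is precisely compatible with this: the distinguished outgoing arc $f$ from each interior vertex is a horizontal arc (or, for $v_{1,j}$, the arc $(v_{1,j},v_{1,j+1})$ when $j \le q-1$), and the only non-horizontal distinguished arc sits at $v_{1,q}$. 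I would therefore present the concentrated-cost vector and then simply check it already meets the reduced-form condition, or apply the reduction procedure \eqref{linear:procedure1} once to move any residual weight onto the allowed arcs. The remaining steps—verifying each $C(P_j,c')$ equals $b_j$ and that $c'\ge 0$—are routine once the path enumeration is fixed, so the entire argument rests on correctly exploiting that the two-row structure forces a one-parameter family of paths indexed by the drop-down column.
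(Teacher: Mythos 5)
Your proof is correct and rests on the same structural observation as the paper's: the $q$ paths in $G_{2,q}$ are indexed by the column of the unique vertical arc they use, so a linearization only has to reproduce $q$ path costs. The witness you choose is different, though. You concentrate the entire cost $b_j = C(P_j,c,Q)$ on the vertical arc $(v_{1,j},v_{2,j})$ for every $j$, including $j=q$; since each path contains exactly one vertical arc this immediately gives $C(P_j,c')=b_j$, and $c'\geq 0$ is automatic from $Q\geq 0$. The paper instead puts $C(P_q,c,Q)$ on $(v_{1,1},v_{1,2})$, the difference $C(P_k,c,Q)-C(P_q,c,Q)$ on $(v_{1,k},v_{2,k})$ for $k=2,\ldots,q-1$, and $C(P_1,c,Q)$ on $(v_{1,1},v_{2,1})$; that is, it writes down the vector already in reduced form (zero on the distinguished outgoing arc $(v_{1,q},v_{2,q})$ of \eqref{linear:outgoingArcf}), which is the form the grid algorithm of Section 6 actually compares against. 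Your vector is not in reduced form whenever $b_q>0$, but, as you note, applying the reduction procedure \eqref{linear:procedure1} along the top row converts it into exactly the paper's vector, so nothing is lost. A small advantage of your witness is that it is manifestly nonnegative, whereas the paper's entries $C(P_k,c,Q)-C(P_q,c,Q)$ can be negative when $C(P_k,c,Q)<C(P_q,c,Q)$; so your construction is the one that literally certifies linearizability under the paper's stated definition requiring $c'\geq 0$, while the paper's version tacitly relies on the fact that negative arc weights are harmless on a DAG.
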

\begin{proof}
Let $P_{i}$ be the unique $s$-$t$ path containing arc $(v_{1,k},v_{2,k})$ for $k = 1,\ldots,q$.  Let us define the linear cost vector $c'$ as follows:
$$c_{e}' =  \begin{cases}
C(P_{1},c,Q) & \text{ if } e = (v_{1,1},v_{2,1}), \\
C(P_{k},c,Q) - C(P_{q},c,Q) & \text{ if } e = (v_{1,k},v_{2,k}) \text{ for some } k = 2,\ldots,q-1, \\
C(P_{q},c,Q) & \text{ if } e = (v_{1,1},v_{1,2}), \\
0 & \text{ otherwise}.
\end{cases}$$
One can readily see that $c'$ is a linearization of $\mathcal{I}$, and thus $\mathcal{I}$ is linearizable.
\end{proof}
Note that a QSPP instance $\mathcal{I} = (G_{2,q},s,t,c,Q)$ is linearizable also in the case that  $c$ is not a zero vector, see  Lemma \ref{linear:czero}.
Similarly one can prove that any QSPP instance on a directed grid graph $G_{p,2}$  is linearizable for every $p \geq 2$.

The following two results are the main  ingredients of our linearizability algorithm.
\begin{lemma} \label{linear:character1}
Let $\mathcal{I} = (G_{p,q},s,t,c,Q)$ be a QSPP instance.
Then $c'$ is a linearization vector of $\mathcal{I}$ if and only if
\begin{enumerate}[topsep=0pt,itemsep=-1ex,partopsep=1ex,parsep=2ex,label=(\roman*)]
\item $\widehat{c}^{p-1,q}$ is a linearization vector of the instance $\mathcal{I}^{p-1,q} = (G_{p,q},s,v_{p-1,q},c,Q)$,
\item $\widehat{c}^{p-1,j} = \widehat{pc}^{p-1,j}$ for $j = 1,\ldots, q-1$,
\end{enumerate}
where $\widehat{c}^{p-1,j}$ is the reduced form of the vector derived as in  \eqref{linear:gridlemma2_eq}, and $\widehat{pc}^{p-1,j}$ is the pseudo-linearization vector of $\mathcal{I}^{p-1,j}$.
\end{lemma}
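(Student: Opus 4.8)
The plan is to reduce the $p\times q$ instance to the family of instances $\{\mathcal{I}^{p-1,j}\}_{j=1}^{q}$ obtained by peeling off the bottom row, and then to translate linearizability of each of these into the two stated conditions. First I would observe that every $s$-$t$ path in $G_{p,q}$ enters row $p$ through a unique down-arc $(v_{p-1,j},v_{p,j})$ and then runs horizontally to $t=v_{p,q}$, which suggests removing row $p$ one predecessor at a time. Since the predecessors of $v_{p,k}$ are $v_{p-1,k}$ (down-arc) and $v_{p,k-1}$ (right-arc, present only for $k\geq 2$), I would apply Lemma \ref{linear:gridlemma2} repeatedly along row $p$: at $v_{p,q}$ it gives that $c'$ linearizes $\mathcal{I}$ iff both $c^{p-1,q}$ linearizes $\mathcal{I}^{p-1,q}$ and $c^{p,q-1}$ linearizes $\mathcal{I}^{p,q-1}$; iterating on the targets $v_{p,q-1},v_{p,q-2},\ldots,v_{p,1}$ (where $v_{p,1}$ has the single predecessor $v_{p-1,1}$) collapses the chain to the equivalence
$$c' \text{ linearizes } \mathcal{I} \iff c^{p-1,j} \text{ linearizes } \mathcal{I}^{p-1,j} \text{ for every } j=1,\ldots,q. \quad (\star)$$
Here $c^{p-1,j}$ is the vector produced by composing the transformations \eqref{linear:gridlemma2_eq}, so its reduced form is exactly $\widehat{c}^{p-1,j}$.

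Next I would convert each clause of $(\star)$ into condition (i) or (ii). For $j=q$, since $\widehat{c}^{p-1,q}$ is the reduced form of $c^{p-1,q}$ it is equivalent to it, hence it linearizes $\mathcal{I}^{p-1,q}$ iff $c^{p-1,q}$ does; this is precisely (i). For $j<q$ I would use that, by Lemma \ref{linear:LemmaUniqueReducedF}, the reduced form of any linearization is unique and determined solely by the costs of the critical paths, and that this unique reduced-form linearization coincides with the pseudo-linearization $\widehat{pc}^{p-1,j}$. Thus if $c^{p-1,j}$ linearizes $\mathcal{I}^{p-1,j}$ then $\widehat{c}^{p-1,j}=\widehat{pc}^{p-1,j}$, giving the forward half of (ii); conversely, once $\mathcal{I}^{p-1,j}$ is known to be linearizable, $\widehat{pc}^{p-1,j}$ is a genuine linearization, so $\widehat{c}^{p-1,j}=\widehat{pc}^{p-1,j}$ forces $c^{p-1,j}\equiv\widehat{c}^{p-1,j}$ to linearize $\mathcal{I}^{p-1,j}$.

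The delicate point, and the main obstacle, is the backward direction for $j<q$: condition (ii) only matches $\widehat{c}^{p-1,j}$ and $\widehat{pc}^{p-1,j}$ on the critical paths, which pins down the cost on \emph{all} paths only when $\mathcal{I}^{p-1,j}$ is already linearizable. I would close this gap by deriving the linearizability of $\mathcal{I}^{p-1,j}$ ($j<q$) from condition (i): since $\widehat{c}^{p-1,q}$ linearizes $\mathcal{I}^{p-1,q}$, I can apply Lemma \ref{linear:gridlemma2} inside the $(p-1)\times q$ grid, moving the target leftward from $v_{p-1,q}$ to $v_{p-1,q-1},\ldots,v_{p-1,j}$; each such step shows that linearizability of $\mathcal{I}^{p-1,k}$ implies linearizability of $\mathcal{I}^{p-1,k-1}$, so by downward induction every $\mathcal{I}^{p-1,j}$ is linearizable. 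With this in hand the backward half of (ii) goes through, and $(\star)$ then delivers that $c'$ linearizes $\mathcal{I}$.

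The remaining steps are routine. I would verify that the target-peeling order does not affect the reduced forms $\widehat{c}^{p-1,j}$ (all intermediate derived vectors agree up to equivalence whenever the relevant instances are linearizable, so they share a reduced form), and check the trivial single-predecessor base case at $v_{p,1}$. I would also note that Lemma \ref{linear:gridlemma2} is stated for arbitrary directed acyclic graphs, so both the row-$p$ peeling and the auxiliary row-$(p-1)$ propagation above are legitimate applications, since $s$-$v_{p-1,j}$ paths are confined to the $(p-1)\times j$ subgrid.
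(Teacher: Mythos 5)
Your proposal is correct and follows essentially the same route as the paper: peel off row $p$ by iterating Lemma \ref{linear:gridlemma2} along the targets $v_{p,q},\ldots,v_{p,1}$ to reduce to the instances $\mathcal{I}^{p-1,j}$, identify $j=q$ with condition (i), and handle $j<q$ by first deducing linearizability of $\mathcal{I}^{p-1,j}$ from that of $\mathcal{I}^{p-1,q}$ and then invoking uniqueness of the reduced form to equate $\widehat{c}^{p-1,j}$ with the pseudo-linearization. Your treatment of the backward direction of (ii) is just a more explicit version of the step the paper states tersely.
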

\begin{proof}
Assume that $c'$ is a linearization vector of $\mathcal{I}$. Applying Lemma \ref{linear:gridlemma2} repeatedly to the instances $\mathcal{I}^{p,j}$ for $j = q,q-1,\ldots,1$, we get that $c'$ is a linearization vector of $\mathcal{I}^{p,q}$ if and only if $c^{p-1,j}$ derived as in Lemma \ref{linear:gridlemma2} is a linearization vector of $\mathcal{I}^{p-1,j}$ for $j = 1,\ldots,q$. Let $\widehat{c}^{p-1,j}$ be the reduced form of the linearization vector $c^{p-1,j}$. Since those two vectors are equivalent, we have that $c'$ is a linearization vector of $\mathcal{I}$ if and only if  $\widehat{c}^{p-1,j}$ is a linearization vector of $\mathcal{I}^{p-1,j}$ for $j = 1,\ldots, q$.

From Lemma \ref{linear:gridlemma2}, we also know that if $\mathcal{I}^{p-1,q}$ is linearizable, then $\mathcal{I}^{p-1,j}$ is linearizable for $j = 1,\ldots,q-1$, and in this case $\widehat{c}^{p-1,j}$ is a linearization vector of $\mathcal{I}^{p-1,j}$ if and only if $\widehat{c}^{p-1,j}$ equals to the pseudo-linearization vector $\widehat{pc}^{p-1,j}$ for $j = 1,\ldots,q-1$.
\end{proof}
Applying Lemma \ref{linear:character1} recursively to the instances $\mathcal{I}^{i,q}$ for $i = p,p-1,\ldots, 3$,
 we obtain the following schema for testing linearizability of a QSPP instance on the grid graph  $G_{p,q}$.
\begin{proposition} \label{linear:character2}
Let $\mathcal{I} = (G_{p,q},s,t,c,Q)$ be a QSPP instance on $G_{p,q}$. It holds that $c'$ is a linearization vector of $\mathcal{I}$ if and only if
\begin{enumerate}[topsep=0pt,itemsep=-1ex,partopsep=1ex,parsep=2ex,label=(\roman*)]
\item $\widehat{c}^{2,q}$ is a linearization vector of the instance $\mathcal{I}^{2,q} = (G,s,v_{2,q},c,Q)$,
\item $\widehat{c}^{i,j} = \widehat{pc}^{i,j}$ for $i= 2, \ldots , p-1$ and $j = 1,\ldots, q-1$,
\end{enumerate}
where $\widehat{c}^{p-1,j}$ is the reduced form of the vector derived as in  \eqref{linear:gridlemma2_eq}
by applying Lemma \ref{linear:character1} recursively. Here $\widehat{pc}^{p-1,j}$ is the pseudo-linearization vector of $\mathcal{I}^{p-1,j}$.
\end{proposition}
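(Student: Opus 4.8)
The plan is to obtain Proposition \ref{linear:character2} as a direct consequence of Lemma \ref{linear:character1} by induction on the target row, peeling off one row of the grid at a time. The idea is that Lemma \ref{linear:character1} reduces the question of whether $c'$ linearizes $\mathcal{I}^{i,q}$ to two conditions: first, that the induced reduced vector $\widehat{c}^{i-1,q}$ linearizes the shorter instance $\mathcal{I}^{i-1,q}$ on the grid with target $v_{i-1,q}$; and second, that for each $j = 1,\ldots,q-1$ the induced reduced vector $\widehat{c}^{i-1,j}$ coincides with the pseudo-linearization vector $\widehat{pc}^{i-1,j}$ of $\mathcal{I}^{i-1,j}$. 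Applying this repeatedly from $i = p$ down to $i = 3$ accumulates the column-conditions for every row index from $p-1$ down to $2$, while the linearizability requirement is pushed down to the base instance $\mathcal{I}^{2,q}$.

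First I would set up the induction cleanly. The base case of the backward recursion is the statement of Lemma \ref{linear:character1} itself applied to $\mathcal{I}^{p,q} = \mathcal{I}$: it tells us that $c'$ linearizes $\mathcal{I}$ if and only if $\widehat{c}^{p-1,q}$ linearizes $\mathcal{I}^{p-1,q}$ and $\widehat{c}^{p-1,j} = \widehat{pc}^{p-1,j}$ for $j = 1,\ldots,q-1$. The inductive step is to apply Lemma \ref{linear:character1} again, this time to the instance $\mathcal{I}^{i,q}$ with target $v_{i,q}$, noting that $\mathcal{I}^{i,q}$ is itself a QSPP instance on the grid subgraph spanned by the first $i$ rows, to which the lemma applies verbatim. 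This replaces the condition ``$\widehat{c}^{i,q}$ linearizes $\mathcal{I}^{i,q}$'' by the two conditions ``$\widehat{c}^{i-1,q}$ linearizes $\mathcal{I}^{i-1,q}$'' together with ``$\widehat{c}^{i-1,j} = \widehat{pc}^{i-1,j}$ for $j = 1,\ldots,q-1$.'' Chaining these equivalences from $i = p$ down to $i = 3$ yields that $c'$ linearizes $\mathcal{I}$ if and only if $\widehat{c}^{2,q}$ linearizes $\mathcal{I}^{2,q}$ and $\widehat{c}^{i,j} = \widehat{pc}^{i,j}$ for all $i = 2,\ldots,p-1$ and $j = 1,\ldots,q-1$, which is exactly the claimed characterization.

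The main obstacle I expect is bookkeeping rather than conceptual depth: one must verify that applying Lemma \ref{linear:gridlemma2} and Lemma \ref{linear:character1} to the truncated instance $\mathcal{I}^{i,q}$ is legitimate, i.e., that the grid structure and the source vertex $s = v_{1,1}$ are preserved when the target moves to an interior vertex and the bottom rows are discarded, and that the successive reduced vectors $\widehat{c}^{i,j}$ are generated consistently by composing the transformations in \eqref{linear:gridlemma2_eq}. One must also be careful that the ``reduced form'' and ``pseudo-linearization'' notions are well-defined and unique at each stage, which is guaranteed by Lemma \ref{linear:LemmaUniqueReducedF}; this uniqueness is what allows the recursively generated $\widehat{c}^{i,j}$ to be compared unambiguously against $\widehat{pc}^{i,j}$. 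I would state the argument as a short induction, invoking Lemma \ref{linear:character1} at each level and collecting the column-conditions, and leave the routine verification of the index ranges to the reader.
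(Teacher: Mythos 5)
Your proposal is correct and matches the paper's approach exactly: the paper derives Proposition \ref{linear:character2} precisely by applying Lemma \ref{linear:character1} recursively to the instances $\mathcal{I}^{i,q}$ for $i = p, p-1, \ldots, 3$, chaining the equivalences and accumulating the conditions $\widehat{c}^{i,j} = \widehat{pc}^{i,j}$ until the linearizability requirement rests on the base instance $\mathcal{I}^{2,q}$. Your additional care about why the lemma applies to the truncated instances and why uniqueness of the reduced form (Lemma \ref{linear:LemmaUniqueReducedF}) makes the comparison well-defined is sound and, if anything, more explicit than the paper's one-line justification.
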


By exploiting Proposition \ref{linear:character2}, we derive an algorithm that verifies
if a QSPP instance on the directed grid graph $G_{p,q}$ is linearizable, see Algorithm \ref{alg:linear}.
Moreover our algorithm returns the linearization vector in reduced form, if such exists.

\begin{theorem}\label{linear:maintheorem}
The algorithm {\sc Linearize-grid-QSPP} determines if a QSPP instance  on the directed grid graph $G_{p,q}$  is linearizable,
and if so it  constructs its linearization vector  in  ${\mathcal{O}(p^{3}q^{2}+p^{2}q^{3})}$ time.
\end{theorem}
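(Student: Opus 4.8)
The plan is to turn the structural characterization of Proposition~\ref{linear:character2} into an explicit recursive algorithm and then carefully account for the running time of each recursive stage. First I would describe the algorithm {\sc Linearize-grid-QSPP} itself. The outer loop runs over rows $i = p, p-1, \ldots, 3$, and maintains at each stage a candidate linearization vector $c^{i,q}$ for the sub-instance $\mathcal{I}^{i,q}=(G_{p,q},s,v_{i,q},c,Q)$ whose target is the rightmost vertex of row $i$. At the top level this is just $c' = c^{p,q}$. For a fixed $i$, the algorithm uses Lemma~\ref{linear:character1} (reindexed from $p$ to $i$): it computes, for each $j = 1, \ldots, q-1$, the vector $c^{i-1,j}$ derived from $c^{i,j}$ via formula~\eqref{linear:gridlemma2_eq}, brings it to reduced form $\widehat{c}^{i-1,j}$, and independently computes the pseudo-linearization vector $\widehat{pc}^{i-1,j}$ of $\mathcal{I}^{i-1,j}$; it then checks the equality $\widehat{c}^{i-1,j} = \widehat{pc}^{i-1,j}$. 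The recursion continues on the instance $\mathcal{I}^{i-1,q}$ with candidate vector $\widehat{c}^{i-1,q}$. The base case is row $2$, where by Lemma~\ref{linear:LemmaG2q} every instance on $G_{2,q}$ is linearizable, so condition~(i) of Proposition~\ref{linear:character2} is automatically satisfied and we output the candidate as a genuine linearization vector. If any equality test in condition~(ii) fails, the algorithm reports that $\mathcal{I}$ is not linearizable.

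Correctness I would obtain directly from Proposition~\ref{linear:character2}: $c'$ is a linearization vector of $\mathcal{I}$ if and only if $\widehat{c}^{2,q}$ linearizes $\mathcal{I}^{2,q}$ (always true by Lemma~\ref{linear:LemmaG2q}) and all the equalities $\widehat{c}^{i,j}=\widehat{pc}^{i,j}$ hold. Since the algorithm checks exactly these equalities and terminates in the base case precisely when condition~(i) holds vacuously, it accepts if and only if the instance is linearizable, and when it accepts the final reduced vector is a valid linearization vector by construction.

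For the complexity I would analyze one iteration of the outer loop and multiply by the number $\mathcal{O}(p)$ of iterations. Within iteration $i$ there are $q-1$ values of $j$ to process. For each $j$: forming $c^{i-1,j}$ from $c^{i,j}$ by~\eqref{linear:gridlemma2_eq} touches each of the $\mathcal{O}(pq)$ arcs a constant number of times and costs $\mathcal{O}(pq)$; reducing it to $\widehat{c}^{i-1,j}$ costs $\mathcal{O}(pq)$ by Lemma~\ref{linear:reducedtime}; and computing the pseudo-linearization vector $\widehat{pc}^{i-1,j}$ costs $\mathcal{O}(p^2q + pq^2)$ by Lemma~\ref{linear:pseudotime}. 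The dominant term is the pseudo-linearization step, so the work for a fixed $(i,j)$ is $\mathcal{O}(p^2q + pq^2)$. Summing over the $\mathcal{O}(q)$ values of $j$ and the $\mathcal{O}(p)$ values of $i$ gives a total of $\mathcal{O}(p \cdot q \cdot (p^2q + pq^2)) = \mathcal{O}(p^3q^2 + p^2q^3)$, which is exactly the claimed bound.

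The step I expect to require the most care is the bookkeeping in the complexity count rather than any deep idea. One must be sure that computing the pseudo-linearization vector $\widehat{pc}^{i-1,j}$ for every $j$ really costs $\mathcal{O}(p^2q+pq^2)$ each—here the iterative trick in the proof of Lemma~\ref{linear:pseudotime}, where successive critical paths differ in only two arcs, is what keeps the per-path cost low—and that transforming the candidate vector through~\eqref{linear:gridlemma2_eq} does not secretly accumulate extra cost across the $\mathcal{O}(pq)$ recursive sub-instances. A secondary subtlety is confirming that applying Lemma~\ref{linear:character1} recursively from row $p$ down to row $3$ genuinely yields the equivalences stated in Proposition~\ref{linear:character2}, i.e.\ that the reduced forms propagated down the rows are the correct candidates; but this is already established in the statement of Proposition~\ref{linear:character2}, so I would simply invoke it. Everything else is routine arithmetic over the grid.
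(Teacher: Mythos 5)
Your overall route is the same as the paper's: correctness comes from Proposition~\ref{linear:character2}, and the running time is dominated by computing the $\mathcal{O}(pq)$ pseudo-linearization vectors at $\mathcal{O}(p^{2}q+pq^{2})$ each (Lemma~\ref{linear:pseudotime}), with the reduced-form computations contributing only $\mathcal{O}(p^{2}q^{2})$ (Lemma~\ref{linear:reducedtime}); summing gives the stated $\mathcal{O}(p^{3}q^{2}+p^{2}q^{3})$. That part of your argument matches the paper and is fine.

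There is, however, one concrete error in your treatment of the base case. You assert that because every instance on $G_{2,q}$ is linearizable (Lemma~\ref{linear:LemmaG2q}), condition~(i) of Proposition~\ref{linear:character2} is ``automatically satisfied'' and the candidate can simply be output. Condition~(i) does not ask whether $\mathcal{I}^{2,q}$ is linearizable; it asks whether the \emph{specific} propagated candidate $\widehat{c}^{2,q}$ is a linearization vector of $\mathcal{I}^{2,q}$. Lemma~\ref{linear:LemmaG2q} guarantees that the pseudo-linearization vector $\widehat{pc}^{2,q}$ is a genuine linearization vector of $\mathcal{I}^{2,q}$, and by uniqueness of the reduced form (Lemma~\ref{linear:LemmaUniqueReducedF}) condition~(i) is therefore equivalent to the equality $\widehat{c}^{2,q}=\widehat{pc}^{2,q}$ --- which must still be tested. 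The paper's Algorithm~\ref{alg:linear} performs exactly this final check; dropping it, as your description implicitly does, would let the algorithm accept instances for which all the row-by-row equalities in condition~(ii) hold but the candidate at $(2,q)$ is wrong, i.e., non-linearizable instances. The fix is a single additional $\mathcal{O}(pq)$ comparison, so your complexity bound is unaffected, but the correctness claim as written has a gap.
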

\begin{proof}
Recall that a QSPP instance is linearizable if and only if the pseudo-linearization vector is a linearization vector.
Therefore, the algorithm {\sc Linearize-grid-QSPP} iteratively applies Proposition \ref{linear:character2} to the pseudo-linearization vector in order to check linearizability
of the instance.

The algorithm involves computation of roughly $p\cdot q$ vectors $c^{ij}$, their reduced forms $\widehat{c}^{i,j}$, and the pseudo-linearization vectors $\widehat{pc}^{ij}$.	
It follows from Lemma \ref{linear:gridlemma2}, that  computing all the vectors $c^{ij}$ can be done iteratively.
From Lemma \ref{linear:reducedtime} we have that  the reduced form vectors $\widehat{c}^{i,j}$
obtained from the vectors $c^{ij}$ ($i=2, \ldots, p-1$, $j=1,\ldots, q-1$ ) can be computed in $\mathcal{O}(p^{2}q^{2})$.
From Lemma \ref{linear:pseudotime} it follows that the calculation of all the pseudo-linearization vectors $\widehat{pc}^{ij}$ requires $\mathcal{O}(p^{3}q^{2}+p^{2}q^{3})$.
The costs of all other calculations are small. Thus, the complexity of the algorithm is $\mathcal{O}(p^{3}q^{2}+p^{2}q^{3})$.
\end{proof}
To derive Algorithm 1, we assume that the linear cost vector is equal to the all-zero vector.
Clearly, our algorithm also works if the linear cost vector is not equal to zero, see Lemma \ref{linear:czero}.
The interested reader can download {\sc Linearize-grid-QSPP} and/or {\sc isLinearizable}
 algorithm from the following link and test linearizability of any QSPP instance on $G_{pq}$ ($p,q \geq 2$).
\begin{center}
{\url{https://huhao.org/}}
\end{center}
{\color{black} \cite{HaoSotirov17} generalize the approach from this section to all directed acyclic graphs.
In particular, they derive an algorithm that verifies whether a QSPP instance on a DAG is linearizable,
and present a new bounding scheme that exploits the corresponding linearization algorithm.}

\begin{algorithm}[H]
	\caption{\sc Linearize-grid-QSPP}\label{alg:linear}
	\begin{algorithmic}[H]
		\State \textbf{Input:} A QSPP instance $\mathcal{I} = (G_{p,q},v_{1,1},v_{p,q},c,Q)$.
		\State \textbf{Output:} The linearization vector of $\mathcal{I}$ if it exists.
		\Procedure{isLinearizable}{$\mathcal{I}$}
		\State $\widehat{pc} \leftarrow $ pseudo-linearization of $\mathcal{I}$
		\For{$i= p-1,\ldots,2$}
		\For{$j=1,\ldots,q-1$}
		\State $\widehat{pc}^{i,j} \leftarrow $ pseudo-linearization of $\mathcal{I}^{i,j}$ by using Lemma \ref{linear:pseudotime}
		\State {$\widehat{c}^{i,j}\leftarrow $ linear cost vector obtained as in Proposition \ref{linear:character2}} 
		\If {$\widehat{c}^{i,j} \neq \widehat{pc}^{i,j}$}
		\State \Return \texttt{false}
		\EndIf
		\State \textbf{end if}
		\EndFor
		\State \textbf{end for}
		\EndFor
		\State \textbf{end for}
		\State Calculate  $\widehat{c}^{2,q}$ using Prop.~\ref{linear:character2}, and pseudo-linearization $\widehat{pc}^{2,q}$ using Lemma \ref{linear:LemmaG2q}.
		\If {$\widehat{pc}^{2,q} \neq \widehat{c}^{2,q}$} 
		\State \Return \texttt{false}
		\EndIf
		\State \textbf{end if}
		\State \textbf{return} \texttt{true} and $\widehat{pc}$ 
		\EndProcedure
	\end{algorithmic}
\end{algorithm}

\section{Conclusion}
In this paper, we study the complexity and special cases of the quadratic shortest path problem.
In Theorem \ref{QAPtoQSPP2}, we present a polynomial-time reduction from the QAP to the QSPP.
 The size of the obtained QSPP instance is significantly smaller than the size of the instance obtained from the reduction provided in  \cite{frey2015quadratic}.
 Further, we give a new and simpler proof,  in comparison with the proof from \cite{rostami2016quadratic},
 showing that the general AQSPP cannot be approximated unless P=NP, see Theorem \ref{AQSPP-NPhard}.

Polynomial-time solvable  special cases of the QSPP are considered in Section \ref{section:linearQSPP}.
In Proposition  \ref{prop:weakSum},   we show that if the quadratic cost matrix is a symmetric  weak sum matrix and every $s$-$t$ path in $G$ has the same length, then the QSPP is linearizable.
In Proposition \ref{prop:productmatrix}, it is proven that if the quadratic cost matrix is a nonnegative symmetric product matrix, then the QSPP can be solved in $\mathcal{O}(m+n\log n)$ time.

 In Proposition \ref{necessary_complete}, we present necessary conditions for a QSPP instance on the complete digraph $K_{n}^{*}$ ($n\geq 4$) to be linearizable.
  These conditions turn out to be also sufficient for $K_{4}^{*}$, see Proposition \ref{linear:K4}.
  We also prove that every instance on a tournament with four vertices is linearizable, see Proposition \ref{linear:T4}.

   We provide a polynomial-time algorithm to check whether
  a QSPP instance on a directed grid graph is linearizable, see Theorem \ref{linear:maintheorem}.
The interested reader can download  our algorithm. \\

\noindent
{\bf Acknowledgements}  The authors would like to thank two anonymous referees for suggestions that led to an improvement of this paper.


\end{document}